\begin{document}

\title[Is a complete, reduced set necessarily of constant width?]{Is a complete, reduced set necessarily of constant width?}

\author{Ren\'e Brandenberg, Bernardo Gonz\'alez Merino, Thomas Jahn, and Horst Martini}
\address{Zentrum Mathematik, Technische Universit\"at M\"unchen, Boltzmannstr. 3, 85747 Garching bei M\"unchen, Germany} \email{brandenb@ma.tum.de}
\email{bg.merino@tum.de}
\address{Fakult\"at f\"ur Mathematik, Technische Universit\"at Chemnitz, 09107 Chemnitz, Germany} \email{thomas.jahn@mathematik.tu-chemnitz.de} \email{martini@mathematik.tu-chemnitz.de}

%\address{Departamento de Matem\'aticas, Universidad de Murcia, Campus
%Espinar\-do, 30100-Murcia, Spain}

\thanks{The second author is partially supported by
Consejer\'ia de Industria,
Turismo, Empresa e Innovaci\'on de la CARM through Fundaci\'on
S\'eneca, Agencia de Ciencia y Tecnolog\'ia de la Regi\'on de
Murcia, Programa de Formaci\'on Postdoctoral de Personal
Investigador
and project 19901/GERM/15, Programme in Support of Excellence
Groups of the Regi\'{o}n de Murcia, Spain, and by MINECO project reference MTM2015-63699-P, Spain.}

\subjclass[2010]{46B20, 52A20, 52A21, 52A40, 52B11}
%\subjclass[2010]{Primary 52A20; Secondary 52A40, 52A21}

\keywords{bodies of constant width, Bohnenblust's inequality, complete bodies, gauges, generalized Minkowski spaces, Leichtweiss' inequality, normed spaces, perfect norms, reduced bodies}
%\keywords{Asymmetry measures, Constant Width, Completions, Geometric Inequalities, Jung constant,
%Minkowski asymmetry, Radii}

\begin{abstract}
  Is it true that a convex body $K$ being complete and reduced with respect to some
  gauge body $C$ is necessarily of constant width, \ie, satisfies $K-K=\rho(C-C)$ for some $\rho>0$?
  We prove this implication for several cases including the following:  if $K$ is a simplex
  and or if $K$ possesses a smooth extreme point, then the implication holds. Moreover, we derive several new results on perfect
  norms.
\end{abstract}

\maketitle

\section{Introduction} \label{s:intro}

The notions of constant width and completeness are well known in fields like convexity, Banach space theory, and convex analysis.
A compact, convex set $K$ in $\R^n$ (i.e., a convex body) is said to be
of \emph{constant width} if the distance of any two parallel supporting hyperplanes of $K$ is the same.
On the other hand, such a convex body $K$ is called (diametrically) \emph{complete} if any proper superset of it has larger
diameter than $K$. It is obvious that these definitions can also be used in any (normed or) Minkowski
space, using the corresponding distance measures.
In Euclidean spaces of any dimension as well as in arbitrary normed planes
  constant width and completeness are equivalent. This is no longer the case in $n$-dimensional Minkowski spaces if $n>2$,
yielding the notion of perfect norms (used for norms in which this equivalence still holds).
Surveys and basic references on bodies of constant width and complete bodies in Euclidean $n$-space are \cite{Cha-Gro}, \cite{Gro},
and \cite{Hei-Mar}, and results on their analogues in Minkowski spaces are collected or proved in \cite{Eg}, \cite{Mar-Swa},
\cite{MSch}, and \cite{MSch2}.
A relatively new and closely related notion is that of reduced bodies: A convex body $K$ in $\R^n$ is said to be \emph{reduced}
if any convex proper subset of it has smaller (minimal) width.
This notion creates already in Euclidean $n$-space sufficiently interesting open research problems
(see the survey \cite{Lass-Mart}). For example, it is unknown whether there exist reduced polytopes
in Euclidean $n$-space if $n>2$ (\cf~\cite{AvMa} and \cite{Lass-Mart}). As first shown in \cite{LM}, the notion of reducedness can be carried over to Minkowski spaces, too; existing results and interesting research problems are presented in \cite{LM2}.
Since, in general, in such spaces complete bodies need not be reduced (see \cite{Mar-Wu})
and one can construct reduced bodies which are not complete, the question is how these two classes are related to each other.
Indeed, the family of bodies of constant width forms a subfamily of both. In this article, for the first time the
question is posed whether for non-perfect norms the family of bodies of constant width forms the intersection of the two other
families! Moreover, this question is investigated even for
\emph{generalized Minkowski spaces}, in which the
 unit balls (called gauge bodies) are still convex bodies having the origin as interior point,
 but need not be centrally symmmetric.
 So our paper contains also various new notions which are interesting for themselves and necessary for switching from normed spaces to generalized Minkowski spaces.

To do so, we use the so called Minkowski asymmetry several times.
There exists a rich variety of asymmetry measures for convex sets
(see \cite[Sect. 6]{Gr} for the possibly most comprehensive
overview), but amongst all, the one receiving most attention is the
Minkowski asymmetry.
In \cite{BrGo2} it is shown how it naturally relates
to complete and constant width sets in Minkowski spaces.
Moreover, it has been repeatedly used to sharpen and strengthen geometric inequalities and related results, \cf~\cite{BeFr,BrK2,GuKa,GoLiMePa}.

We prove the following results for generalized Minkowski spaces: If the convex body $K$ is a complete and reduced simplex, then it is of constant width. And the same implication holds for the large family of all convex bodies possessing a smooth extreme point (obviously, this class contains all strictly convex and all smooth convex bodies). Extending the notion of perfect norm to generalized Minkowski spaces, we also obtain some results on perfect gauge bodies, including a characterization of them via completions of the convex bodies under consideration.

\section{Notation and background}

By $\conv(A)$, ${\rm int}(A)$, and $\aff(A)$ we denote the \cemph{dred}{convex hull}, the \cemph{dred}{interior}, and the \cemph{dred}{affine hull} of a set $A\subset\R^n$,
respectively, and we will write $[x,y]=\conv(\{x,y\})$ for the line segment whose endpoints are $x,y\in\R^n$. We also use the notation $[n]$ for $\{1,\dots,n\}$.
%$\lin(C)$ and
%\cemph{dred}{linear} and the

Let $\CK^n$ be the family of convex and compact sets (bodies) in $\R^n$, and let $C,K \in\CK^n$.
We call $K+C :=\{x+y:x\in K,\,y\in C\}$ the
\cemph{dred}{Minkowski sum} of $K$ and $C$, and for any $\rho > 0$, $\rho K := \{\rho x : x\in K\}$ is the \cemph{dred}{$\rho$-dilatation} of $K$;
we write $-K:=(-1)K$.

If $C \in \CK^n$ with the \emph{origin} $0 \in \inte(C)$, then
$C$ may be called a \cemph{dred}{gauge body} (or \emph{unit ball}) of a \emph{generalized Minkowski space} induced by $C$.
Any non-negative function $\gamma$,  which takes the value $0$ only at the origin and
satisfies $\gamma(\lambda x) = \lambda \gamma(x)$ for all $\lambda \ge 0$
and $\gamma(x+y) \le \gamma(x)+\gamma(y)$, is called a \cemph{dred}{gauge function}.
Thus a gauge function meets all the requirements of a norm except for the symmetry $\gamma(x) = \gamma(-x)$.

This means that the definitions $\gamma(x) := \inf \{\lambda \ge 0 | x \in \lambda C\}$
for any given gauge body $C$
and $C := \{x \in \R^n | \gamma(x) \le 1\}$ for any given gauge function $\gamma$
establish a one-to-one correspondence between gauge bodies and gauge functions similar to the well known
  one-to-one correspondence between norms and $0$-symmetric bodies with non-empty interior.
Note that in the following we do not really assume $0$ to be an interior point of $C$, as we
do not use the gauge function; our considerations are, more generally, based on translation-invariant radius functions.

Denoting the Hausdorff distance by $d_H$, we say that a sequence $(A_i)_{i \in \N}$ with $A_i \subset \R^n$
  converges to $A \subset \R^n$ if $\lim_{i\rightarrow\infty} d_H(A_i,A)=0$.

The \cemph{dred}{support function} $h(K,\cdot) : \R^n \to \R$ of a convex body $K$ is defined by
$h(K,a)=\max\{a^T x : x\in K\}$, $a\in\R^n$. For $b\in\R$, we write $H^\le_{a,b}:= \{x\in\R^n:a^Tx \le b\}$
for the half-space with outer normal $a$ and offset $b$, and $H_{a,b}:= \{x\in\R^n:a^Tx=b\}$ is written
for the corresponding boundary hyperplane.
The hyperplane $H_{a,b}$ \textit{supports} $K$ at $x\in K$ if $x\in H_{a,b}$ and $K\subset H_{a,b}^\le$,
which means that $b=h(K,a)$.
A point $x\in \bd(K)$ is \cemph{dred}{extreme} if $x \not\in \conv(K\setminus\{x\})$,
and \cemph{dred}{smooth} if there exists a unique supporting hyperplane supporting $K$ at $x$.
The set of all extreme points of $K$ is denoted by $\ext(K)$.

The term $K \subset_t C$ abbreviates that there exists a translation $c \in \R^n$ such that $K \subset c+C$, and the
term $K \subset^{\opt} C$ summarizes that $K \subset C$ and for all $\rho<1$ it holds that $K \not\subset_t \rho C$.
The \cemph{dred}{circumradius} $R(K,C)$ of $K$ with respect to $C$ is the smallest $\lambda\ge 0$
such that a translate of $\lambda C$ contains $K$, \ie, there exists $c \in \R^n$ such that $K \subset^{\opt} c + R(K,C) C$.
Now, the \cemph{dred}{inradius} $r(K,C)$ of $K$ with respect to $C$ is the largest $\lambda\ge 0$ such that
a translate of $\lambda C$ is contained in $K$.
The translations needed above are called the \cemph{dred}{circumcenter} and the \cemph{dred}{incenter} of $K$ with respect to $C$,
respectively.

The \cemph{dred}{diameter}
$D(K,C)$ of $K$ with respect to $C$ is defined as $D(K,C)=2\max\{R([x,y],C):x,y\in K\}$,
the \cemph{dred}{$s$-breadth} $b_s(K,C)$ (often also called \cemph{dred}{$s$-width}) of $K$ with respect to $C$
in direction of $s \in \R^n\setminus \{0\}$ is $b_s(K,C)=2h(K-K,s)/h(C-C,s)$.
We will use several times that $D(K,C) = \max_{s\neq 0}b_s(K,C)$, which is shown to be true in \cite{GK}
for $C$ symmetric, but obviously remains true, since both, diameter and $s$-breadth, keep
constant when replacing $C$ by $1/2(C-C)$ (see \cite[Lemma 2.8]{BrK2}).
The \cemph{dred}{(minimal) width} $w(K,C)$ of $K$ with respect to $C$
is $w(K,C)=\min_{s\neq 0}b_s(K,C)$.

The \cemph{dred}{Minkowski asymmetry} $s(K)$ is the smallest
$\lambda \ge 0$ such that $\lambda K$ contains a translate of $-K$, \ie, $s(K) =R(-K,K)$. Moreover, if for $c\in\R^n$
the inclusion $-(-c+K)\subset s(K)(-c+K)$ holds, we say that $c$ is the \cemph{dred}{Minkowski center} of $K$, and if $c=0$,
we say that $K$ is \cemph{dred}{Minkowski-centered}.

%%%% Maybe we remove this paragraph
%We denote the \cemph{dred}{Jung constant} of $K$ with respect to $C$ by $j(K,C)=R(K,C)/D(K,C)$, and the \cemph{dred}{Jung constant}
%of $C$ by $j(C)=\sup\{j(K,C):K\in\CK^n\}$. The \cemph{dred}{Steinhagen constant} of $K$ with respect to $C$ is
%$st(K,C)=r(K,C)/w(K,C)$, and the \cemph{dred}{Steinhagen constant} of $C$ is $st(C)=\inf\{st(K,C):K\in\CK^n\}$.
%If $C=-C$ is the unit ball of $\M^n$, we then denote $j(C)=j(\M^n)$ and $st(C)=st(\M^n)$, respectively.
%%%%

A set $K$ is \cemph{dred}{complete} with respect to $C$ if $D(K',C)>D(K,C)$ for every $K' \supsetneq K$,
and $K$ is \cemph{dred}{reduced} with respect to $C$ if $w(K',C)<w(K,C)$,
for every $K' \subsetneq K$.
With $K^*\supset K$ and $K_* \subset K$ we denote a \cemph{dred}{completion} or a \cemph{dred}{reduction} of $K$, respectively.
This means in the first case that $D(K,C)=D(K^*,C)$ as well as $K^*$ is complete, and in the second that $w(K_*,C)=w(K,C)$ as well as $K_*$ is reduced.
%Such sets always exist, see  \cite{Eg} for completions and \cite{LM2} for reductions.
A set $K$ is of \cemph{dred}{constant width} with respect to $C$ if $w(K,C)=D(K,C)$ or, equivalently, if $K-K=\rho(C-C)$
(where $\rho=1/2 D(K,C)$ in this case).
A set $K$ is called \cemph{dred}{pseudo-complete} with respect to a centrally symmetric $C$ if
$D(K,C)=r(K,C)+R(K,C)$. Recognize that if $C$ is centrally symmetric, then
any complete $K$ is also pseudo-complete (see \cite{MSch}).

A gauge body $C\in\CK^n$ and the generalized Minkowski space induced by $C$ are called \cemph{dred}{perfect} if $K$ is of constant width with respect to $C$ whenever $K$ is complete with respect to $C$.
By definition, in case that $C=-C$, the norm induced by $C$ is called \emph{perfect} iff $C$ is perfect.% \cf~\cite{MSch}.

By an \cemph{dred}{$n$-simplex} we denote the convex hull of $n+1$ affinely independent points.

\section{%Problems and results}
Completeness and reducedness}\label{sec:probres}

In Euclidean spaces of arbitrary dimension and in normed planes completeness and constant width are equivalent notions (see \cite{BF}, \cite{Eg}, and \cite{Mar-Swa}, as well as
Lemma \ref{l:polyimplgen} below). Moreover, it is easy to see that any $K$ of constant width with respect to an arbitrary
body $C$ is complete and reduced with respect to $C$. However, the contrary is, to the best of our knowledge,
not known in general and has not been asked before, and it is the backbone of this article.

\begin{open}\label{op:compltred}
Let $K,C\in\CK^n$ be such that $K$ is complete and reduced with respect to $C$. Does this imply that
$K$ is of constant width with respect to $C$?
\end{open}

The following lemma collects some facts about completeness and reducedness, showing that most of the problems
may be reduced from arbitrary bodies to symmetric ones.

\begin{lem}\label{l:gauge}
Let $K,C\in\CK^n$. Then the following statements hold true.
\begin{enumerate}[(i)]
\item\label{p:KcwC-C} $K$ is of constant width with respect to $C$ iff $K$ is of constant width with respect to $C-C$.
\item\label{p:KcomC-C} $K$ is complete with respect to $C$ iff $K$ is complete with respect to $C-C$.
\item\label{p:KredC-C} $K$ is reduced with respect to $C$ iff $K$ is reduced with respect to $C-C$.
\item\label{p:PerfectC-C} $C$ is perfect iff $C-C$ is perfect.
\item\label{p:existCR} There exist completions and reductions of $K$ with respect to $C$.
\item\label{p:CompExt} If $K$ is complete with respect to $C$, then every point $x\in\bd(K)$
is the endpoint of a diametrical segment.
\item\label{p:ReduExt} If $K$ is reduced with respect to $C$, then for every $x\in \ext(K)$
there exist $y_x\in K$ and $s\in\R^n\setminus\{0\}$ such that $b_s([x,y_x],C)=b_s(K,C)=w(K,C)$
(see \cite[Theorem 1]{LM2} for the case that $C=-C$).
\item\label{p:SIP} The set $K$ is complete with respect to $C$ iff $K = \bigcap_{x \in \bd(K)} (x + D(K,C-C)(C-C))$ (spherical
intersection property with respect to $C-C$).
\end{enumerate}
\end{lem}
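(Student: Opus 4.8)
The plan is to make everything rest on the single scaling identity between $C$ and its symmetrization $C-C$, and then to treat the three genuinely geometric statements (vi)--(viii) by local perturbation arguments.

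First I would record that $h((C-C)-(C-C),s)=h(2(C-C),s)=2h(C-C,s)$, so the definition of the $s$-breadth immediately yields $b_s(K,C-C)=\tfrac12 b_s(K,C)$ for every $s\neq 0$. Taking the minimum and maximum over $s$ gives $w(K,C-C)=\tfrac12 w(K,C)$ and $D(K,C-C)=\tfrac12 D(K,C)$. All four equivalences (i)--(iv) then reduce to comparing these scalars: constant width is the scale-invariant equality $w=D$, giving (i); completeness and reducedness compare $D(K',\cdot)$ with $D(K,\cdot)$ and $w(K',\cdot)$ with $w(K,\cdot)$ respectively, and since both radii are merely halved the strict inequalities are preserved in both directions, giving (ii) and (iii); finally (iv) is the formal consequence of (i) and (ii), because perfectness is the implication \enquote{complete $\Rightarrow$ constant width} and both its hypothesis and its conclusion are unchanged under $C\mapsto C-C$.

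For (v) I would invoke Zorn's lemma. For completions, order $\{K'\in\CK^n: K\subseteq K',\ D(K',C)=D(K,C)\}$ by inclusion; all members lie in a fixed bounded region, and the closure of the union of a chain again has diameter $D(K,C)$, since $2R([x,y],C)$ is, up to a positive factor, the $(C-C)$-norm of $x-y$, hence continuous and convex in $(x,y)$, so $D$ is attained at pairs of points and is continuous under Hausdorff convergence. A maximal element is a completion. For reductions, order $\{K'\in\CK^n: K'\subseteq K,\ w(K',C)=w(K,C)\}$ by reverse inclusion; a chain has nonempty compact convex intersection to which it Hausdorff-converges, and since $w$ is continuous under Hausdorff convergence the intersection still has width $w(K,C)$, so a minimal element is a reduction. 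The two geometric statements are complementary perturbations. For (vi), if a boundary point $x$ were the endpoint of no diametrical segment, then $\max_{y\in K}2R([x,y],C)=:D'<D(K,C)$; pushing $x$ slightly outward to some $x'\notin K$ keeps every pair $[x',y]$, $y\in K$, below $D(K,C)$ by continuity, and since $D$ is attained at extreme points, $\conv(K\cup\{x'\})\supsetneq K$ has diameter $D(K,C)$, contradicting completeness. For (vii), a direct computation shows $b_s([x,y],C)=b_s(K,C)$ forces $\lvert s^T(x-y)\rvert=h(K,s)+h(K,-s)$, i.e.\ $x$ and $y$ lie on the supporting hyperplanes $H_{s,h(K,s)}$ and $H_{-s,h(K,-s)}$; thus the claim is that every $x\in\ext(K)$ lies on a supporting hyperplane realizing the minimal width in some direction. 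If some $x_0\in\ext(K)$ lay on none of the faces $F^{\pm}_s:=K\cap H_{\pm s,h(K,\pm s)}$ with $b_s(K,C)=w(K,C)$, then a sufficiently small cap cut off at $x_0$ would preserve all these minimal breadths while keeping the remaining, uniformly larger, breadths above $w(K,C)$, so the width would survive on a proper subset, contradicting reducedness. Guaranteeing with a single small cap that all directions near a minimal one are simultaneously unaffected is the main obstacle here, and it is exactly where upper semicontinuity of the face map $s\mapsto F^{\pm}_s$ together with compactness of the set of minimal-width directions is needed.

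Finally, for (viii) I would use (ii) to reduce to the symmetric body $\tilde C:=C-C$; writing $\tilde D:=D(K,\tilde C)$ and letting $\gamma$ be the norm with unit ball $\tilde C$, one has $2R([x,y],\tilde C)=\gamma(x-y)$ and $x+\tilde D\tilde C=\{z:\gamma(z-x)\le\tilde D\}$. The inclusion $K\subseteq\bigcap_{x\in\bd(K)}(x+\tilde D\tilde C)$ always holds because $\gamma(y-x)\le\tilde D$ for $x,y\in K$. If $K$ is complete and $z$ lies in the intersection, then $\gamma(z-x)\le\tilde D$ for all $x\in\bd(K)$, hence for all $x\in K$ by convexity of $\gamma(z-\cdot)$, so $D(\conv(K\cup\{z\}),\tilde C)=\tilde D$ and completeness forces $z\in K$, giving the reverse inclusion. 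Conversely, if the intersection equals $K$ and $K'\supsetneq K$, choosing $z\in K'\setminus K$ produces some $x\in\bd(K)$ with $\gamma(z-x)>\tilde D$, whence $D(K',\tilde C)>\tilde D=D(K,\tilde C)$, so $K$ is complete. Rewriting $\tilde D\tilde C=D(K,C-C)(C-C)$ yields the stated spherical intersection property.
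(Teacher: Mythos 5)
Your proposal is correct, and for items (i)--(iv) it is essentially identical to the paper's argument: both rest on the scaling identities $w(K,C)=2w(K,C-C)$ and $D(K,C)=2D(K,C-C)$ (equivalently $b_s(K,C-C)=\tfrac12 b_s(K,C)$), with (iv) as a formal consequence of (i) and (ii). The difference lies in (v)--(viii): the paper disposes of these in one sentence by reducing to the symmetric case via (ii) and (iii) and then citing the known results for $C=-C$ (Eggleston, Lassak--Martini, Martini--Swanepoel), whereas you reprove them from scratch. Your direct arguments are in substance the classical ones from those references --- Zorn's lemma plus continuity of $D$ and $w$ under Hausdorff convergence for (v), the outward perturbation of a non-diametrical boundary point for (vi), the cap-cutting argument at an extreme point for (vii), and the standard spherical-intersection equivalence via the norm $\gamma_{C-C}$ for (viii) --- so what your route buys is self-containedness at the cost of length, not new generality. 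Two remarks. First, your reduction in (vii) of $b_s([x,y_x],C)=b_s(K,C)$ to the condition that $x$ and $y_x$ lie on the two supporting hyperplanes of a minimal-width direction is exactly right, and you correctly identify the one delicate point: directions $s$ near the set $S_{\min}$ of minimal-width directions have $b_s(K,C)$ only slightly above $w(K,C)$, so one must argue that the cap avoids the faces $F^{\pm}_s$ for \emph{all} $s$ in a neighborhood of $S_{\min}$; upper semicontinuity of $s\mapsto F^{\pm}_s$ together with compactness of $S_{\min}$ does close this, but as written it is a sketch rather than a proof, so you should either carry out that uniformity argument or simply cite \cite[Theorem 1]{LM2} as the paper does. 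Second, in (vi) and (viii) you implicitly use that $2R([x,y],C)$ equals the norm of $x-y$ with unit ball $\tfrac12(C-C)$ and that the diameter is attained at pairs of extreme points; both are true and worth stating explicitly, since they are what makes the triangle-inequality and convexity steps legitimate for non-symmetric $C$.
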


\begin{proof}
The first statement directly follows from the fact that $K$ is of constant width with respect to $C$ iff $K-K = 1/2D(K,C)(C-C)$,
and \eqref{p:KcomC-C} as well as \eqref{p:KredC-C} directly follow from the fact that
$w(K,C)=2w(K,C-C)$ and $D(K,C)=2D(K,C-C)$.
The fourth statement is a direct corollary out of \eqref{p:KcwC-C} and \eqref{p:KcomC-C}, while
the others follow from \eqref{p:KcomC-C} and \eqref{p:KredC-C},
taking into account that
all those statements are well known for the case that $C = -C$ (see \cite{Eg}, \cite{LM2}, and \cite{Mar-Swa}).
\end{proof}

The following proposition characterizes an optimal containment between two sets
by their touching points (\cf~Theorem 2.3 in \cite{BrK}).

\begin{prop}\label{prop:opt_hom}
  Let $K,C \in {\mathcal K}^n$. We have $K \subset^{\opt}C$ iff $K\subset C$ and for some $2\leq m\leq n+1$,
  there exist $p^1,\dots,p^m\in K \cap C$ and hyperplanes $H_{a^i,1}$ supporting $K$ and $C$ in $p^i$, $i \in [m]$,
  such that $0\in\conv(\{a^1,\dots,a^m\})$.
\end{prop}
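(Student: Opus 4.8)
The first step is to normalize and reformulate. Since both the containment $K\subset C$ and the optimality condition $\subset^{\opt}$ are invariant under translating $K$ and $C$ by a common vector $t$ (this shifts every support value $h(K,a)$, $h(C,a)$ by $a^Tt$ and hence preserves the equality $h(K,a)=h(C,a)$), I may assume without loss of generality that $0\in\inte(C)$, so that $h(C,\cdot)>0$ everywhere. Let $N:=\{a\in\R^n : |a|=1,\ h(K,a)=h(C,a)\}$ be the set of unit \emph{contact normals}. For $a\in N$ the hyperplane $H_{a,h(C,a)}$ supports both bodies, so the nonempty face $K\cap H_{a,h(C,a)}\subseteq K\cap C$ provides a common touching point, and rescaling $a$ to $a/h(C,a)$ turns it into a normal with offset $1$ as in the statement. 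Because $0\in\conv(\{a^1,\dots,a^m\})$ is unchanged when the individual $a^i$ are rescaled by positive factors, the configuration required in the statement exists if and only if $0\in\conv(N)$; Carath\'eodory's theorem then bounds the number of needed normals by $n+1$, while $m\ge 2$ is automatic since no unit vector equals $0$. Thus it suffices to prove that $K\subset^{\opt}C$ holds if and only if $0\in\conv(N)$.

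For the ``if'' part I argue by contradiction. Assume $0=\sum_i\lambda_i a^i$ with $\lambda_i\ge 0$, $\sum_i\lambda_i=1$ and $a^i\in N$ rescaled to $h(C,a^i)=1$, but $K\subset c+\rho C$ for some $c\in\R^n$ and $\rho<1$. Evaluating support functions gives $1=h(K,a^i)\le (a^i)^Tc+\rho\,h(C,a^i)=(a^i)^Tc+\rho$, so $(a^i)^Tc\ge 1-\rho>0$ for every $i$. Forming the convex combination then yields $0=\bigl(\sum_i\lambda_i a^i\bigr)^Tc\ge 1-\rho>0$, a contradiction; hence no smaller translate of $C$ contains $K$, i.e.\ $K\subset^{\opt}C$.

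For the ``only if'' part I prove the contrapositive and suppose $0\notin\conv(N)$. As $N$ is compact (a closed subset of the unit sphere), so is $\conv(N)$, and it can be strictly separated from the origin: there are $v\in\R^n$ and $\delta>0$ with $a^Tv\ge\delta$ for all $a\in N$. Geometrically, translating $K$ by a small positive multiple of $v$ pushes it off every contact hyperplane, leaving room to contract $C$. To make this precise I consider $\phi_\varepsilon(a):=(h(K,a)-\varepsilon v^Ta)/h(C,a)$ on the unit sphere; its supremum is exactly the smallest factor $\rho$ with $K\subset\varepsilon v+\rho C$. At $\varepsilon=0$ the function $g:=h(K,\cdot)/h(C,\cdot)$ satisfies $g\le 1$, with equality precisely on $N$.

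The only genuinely nonroutine step, and the one I expect to be the main obstacle, is to upgrade this pointwise behaviour into a \emph{uniform} bound $\sup_{|a|=1}\phi_\varepsilon(a)<1$. I would split the sphere into an open neighbourhood $U$ of $N$ and its compact complement. On $U$ continuity keeps $v^Ta$ positive, so subtracting $\varepsilon v^Ta/h(C,a)$ yields a gain linear in $\varepsilon$; on the complement $g$ attains a maximum strictly below $1$, which absorbs the $O(\varepsilon)$ perturbation once $\varepsilon$ is small. Combining the two estimates gives $\rho:=\sup_{|a|=1}\phi_\varepsilon(a)<1$ for all sufficiently small $\varepsilon>0$, so $K\subset\varepsilon v+\rho C$, whence $K\subset_t\rho C$ with $\rho<1$, contradicting $K\subset^{\opt}C$. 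This separation-plus-compactness scheme is exactly the elementary incarnation of the first-order optimality condition for the convex circumradius function $t\mapsto\sup_{a}(h(K,a)-t^Ta)/h(C,a)$ at its minimizer $t=0$, and carrying out that subdifferential computation directly would give an equivalent proof.
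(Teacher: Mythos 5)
The paper does not actually prove Proposition~\ref{prop:opt_hom}; it imports it from Theorem~2.3 of \cite{BrK}, so there is no in-paper argument to compare against. Your proof is correct and self-contained, and it follows the same duality idea that underlies the cited result. The reduction to the condition $0\in\conv(N)$ for the set $N$ of unit contact normals is valid (positive rescaling of the $a^i$ indeed preserves $0\in\conv(\{a^1,\dots,a^m\})$, and Carath\'eodory plus $0\notin N$ give the bounds $2\le m\le n+1$); the \enquote{if} direction via $({a^i})^Tc\ge 1-\rho>0$ and the convex combination is clean; and the \enquote{only if} direction via strict separation of $0$ from the compact set $\conv(N)$ and the two-region uniform estimate for $\phi_\varepsilon$ is sound, including the degenerate case $N=\emptyset$, where the complement of $U$ is the whole sphere and $\max g<1$ already yields a smaller homothet. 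Two small points worth making explicit: the normalization $0\in\inte(C)$ (hence $h(C,\cdot)>0$ on the unit sphere, which you use to divide by $h(C,a)$ and to rescale normals to offset $1$) tacitly assumes $C$ is full-dimensional, which is the only case in which the statement's hyperplanes $H_{a^i,1}$ are meaningful and the only case the paper uses; and in the neighbourhood $U$ of $N$ one should fix $U=\{a: v^Ta>\delta/2\}$ so that the gain $\varepsilon v^Ta/h(C,a)$ is bounded below by $\varepsilon\delta/(2\max_{|a|=1}h(C,a))$ uniformly, which is exactly what your sketch intends.
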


  The following corollary combines optimal containment with the notion of Minkowski-centered polytopes.

\begin{cor}\label{cor:AsymCont}
Let $P\in\CK^n$ be a Minkowski-centered polytope.
Then
\[
\left(1+\frac{1}{s(P)}\right)\conv(P\cup(-P))\subset^{\opt} P-P \subset^{\opt} (s(P)+1)(P\cap(-P))\,,
\]
and there exist vertices $p^i$ and facet normals $a^i$ of $P$, with $i \in [m]$ for some $2\leq m\leq n+1$,
such that $0\in\conv(\{a^1,\dots,a^m\})$ and $\pm (1+1/s(P))p^i$ is a vertex of
$\left(1+1/s(P)\right)\conv(P\cup(-P))$ contained in a facet of $P-P$,
which itself is completely contained in a facet of $(s(P)+1)(P\cap(-P))$, both with outer normal $\mp a^i$.
\end{cor}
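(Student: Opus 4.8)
The plan is to extract all the contact data from the single optimal containment that encodes the Minkowski asymmetry, and then to transport that data to the three symmetric bodies. Write $s := s(P)$. Since $P$ is Minkowski-centered, the inclusion $-P\subset sP$ is optimal and centered at the origin, so after rescaling $-\tfrac1s P\subset^{\opt}P$. First I would apply Proposition~\ref{prop:opt_hom} to this containment, obtaining $2\le m\le n+1$ touching points together with hyperplanes $H_{a^i,1}$ supporting both $-\tfrac1s P$ and $P$, and with $0\in\conv(\{a^1,\dots,a^m\})$. Because $P$ is a polytope the contact may be arranged at vertices of the inner body, so each touching point has the form $-\tfrac1s p^i$ for a vertex $p^i$ of $P$, and each $a^i$ may be chosen as a facet normal of $P$. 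Reading off the support values gives $h(P,a^i)=1$ and $h(P,-a^i)=s$, and one obtains $a^i{}^T p^i=-s$ with $-a^i$ lying in the normal cone of $P$ at $p^i$; these relations, rather than the points themselves, drive everything that follows.

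Next I would record the two chain inclusions by elementary support-function estimates. The centeredness $-P\subset sP$ is equivalent to $h(P,u)\le s\,h(P,-u)$ for all $u$, and symmetrically. Since $h(\conv(P\cup(-P)),u)=\max(h(P,u),h(P,-u))$ while $h(P-P,u)=h(P,u)+h(P,-u)$, bounding the larger of $h(P,\pm u)$ by its $s$-fold partner yields $(1+\tfrac1s)\conv(P\cup(-P))\subset P-P$. For the right-hand inclusion I would use $P-P\subset(1+s)P$ together with its reflection $P-P\subset(1+s)(-P)$, whence $P-P\subset(s+1)(P\cap(-P))$. This settles the plain inclusions; it remains to upgrade them to optimal containments and to locate the announced vertex–facet incidences.

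Then I would match heights in the distinguished directions $\pm a^i$. In direction $-a^i$ the body $\conv(P\cup(-P))$ is exposed only through $P$, because $h(-P,-a^i)=h(P,a^i)=1<s$; hence $p^i$ is an extreme point of $\conv(P\cup(-P))$, so $(1+\tfrac1s)p^i$ is a vertex of $(1+\tfrac1s)\conv(P\cup(-P))$ sitting at height $(1+\tfrac1s)s=s+1$. As $h(P-P,-a^i)=s+1$ as well, this vertex lies on the facet of $P-P$ with outer normal $-a^i$. To place that facet inside a facet of $(s+1)(P\cap(-P))$ I must evaluate $h(P\cap(-P),-a^i)$: the touching point $-\tfrac1s p^i$ lies in $P\cap(-P)$ (using $-\tfrac1s P\subset -P$), and its reflection attains height $1$ in direction $-a^i$, while the upper bound $h(P\cap(-P),-a^i)\le h(-P,-a^i)=1$ forces equality. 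Equal supporting heights $s+1$ for $P-P$ and $(s+1)(P\cap(-P))$ in direction $-a^i$, combined with the inclusion already proved, give the facet-in-facet incidence with common normal $-a^i$ (and $+a^i$ for the antipodal data). With these contacts realized at $\pm a^i$ and with $0\in\conv(\{a^i\})$ inherited from the proposition, a second and a third application of Proposition~\ref{prop:opt_hom} promote both inclusions to $\subset^{\opt}$, completing the chain.

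I expect the main obstacle to be the bookkeeping that ties the abstract contact data of Proposition~\ref{prop:opt_hom} to honest vertices $p^i$ and facet normals $a^i$ of $P$, with every sign and every height aligned so that $(1+\tfrac1s)p^i$ lands precisely on the height-$(s+1)$ facets of both $P-P$ and $(s+1)(P\cap(-P))$. In particular the evaluation of the intersection-body support $h(P\cap(-P),-a^i)$ is delicate, since it is not simply the minimum of the two support values and must be certified by exhibiting an explicit point of $P\cap(-P)$ attaining it.
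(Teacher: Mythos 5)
Your proposal is correct and follows essentially the same route as the paper: apply Proposition~\ref{prop:opt_hom} to the optimal containment expressing Minkowski-centeredness, extract vertices $p^i$ and facet normals $a^i$ of $P$ from the touching data, and match support values in the directions $\mp a^i$ to obtain the vertex--facet--facet incidences. You are somewhat more explicit than the paper in deriving the two inclusions and their optimality from support-function identities (the paper imports the containment chain and only tracks the touching points), but the underlying argument is the same.
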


\begin{proof}
Since $0$ is the Minkowski center of $P$, we have $-P \subset^{\opt} s(P)P$.
Thus, by Proposition \ref{prop:opt_hom}, there exist vertices $p^i$ of $P$ and
$a^i\neq 0$, $i \in [m]$, satisfying $0\in\conv(\{a^1,\dots,a^m\})$, such that $F_i= H_{a^i,1}\cap P$ is a facet of $P$ with
$-p^i\in s(P) F_i$ for all $i \in [m]$, for some $m \in \{2,\dots,n+1\}$.
Now, it obviously holds that $\pm F_i':=\pm(F_i-p^i)$ is a facet of $P-P$ containing
the vertex $\pm(1+1/s(P))p^i$ of $(1+1/s(P))\conv(P\cup(-P))$, which is also contained in the facet
$\pm(s(P)+1) (F_i\cap(-P))$ of $(1+s(P))(P\cap(-P))$. This proves that the latter is a superset of $F_i'$ using the
containment of $P-P$ in $(s(P)+1)(P\cap(-P))$.
\end{proof}

The containment chain
\[
\left(1+\frac{1}{s(K)}\right)\conv(K\cup(-K)) \subset^{\opt} K-K \subset^{\opt} (s(K)+1)(K\cap(-K))
\]
in Corollary \ref{cor:AsymCont} remains true for non-polytopal $K\in\CK^n$.
However, the \enquote{facet-facet touching} of  $K-K$ and $(s(K)+1)(K\cap(-K))$ described in the corollary gets lost.

\medskip

If $K \subset \R^3$ is a regular tetrahedron with centroid at the origin, Corollary \ref{cor:AsymCont} explains
how the cube $\conv(K\cup(-K))$, the cuboctahedron $K-K$, and the octahedron $K\cap(-K)$
can be placed such that the cube is optimally contained in the octahedron, and still the cuboctahedron fits in between
(\cf~Figure \ref{1}).

\begin{figure}[h]
      \includegraphics[width=6cm]{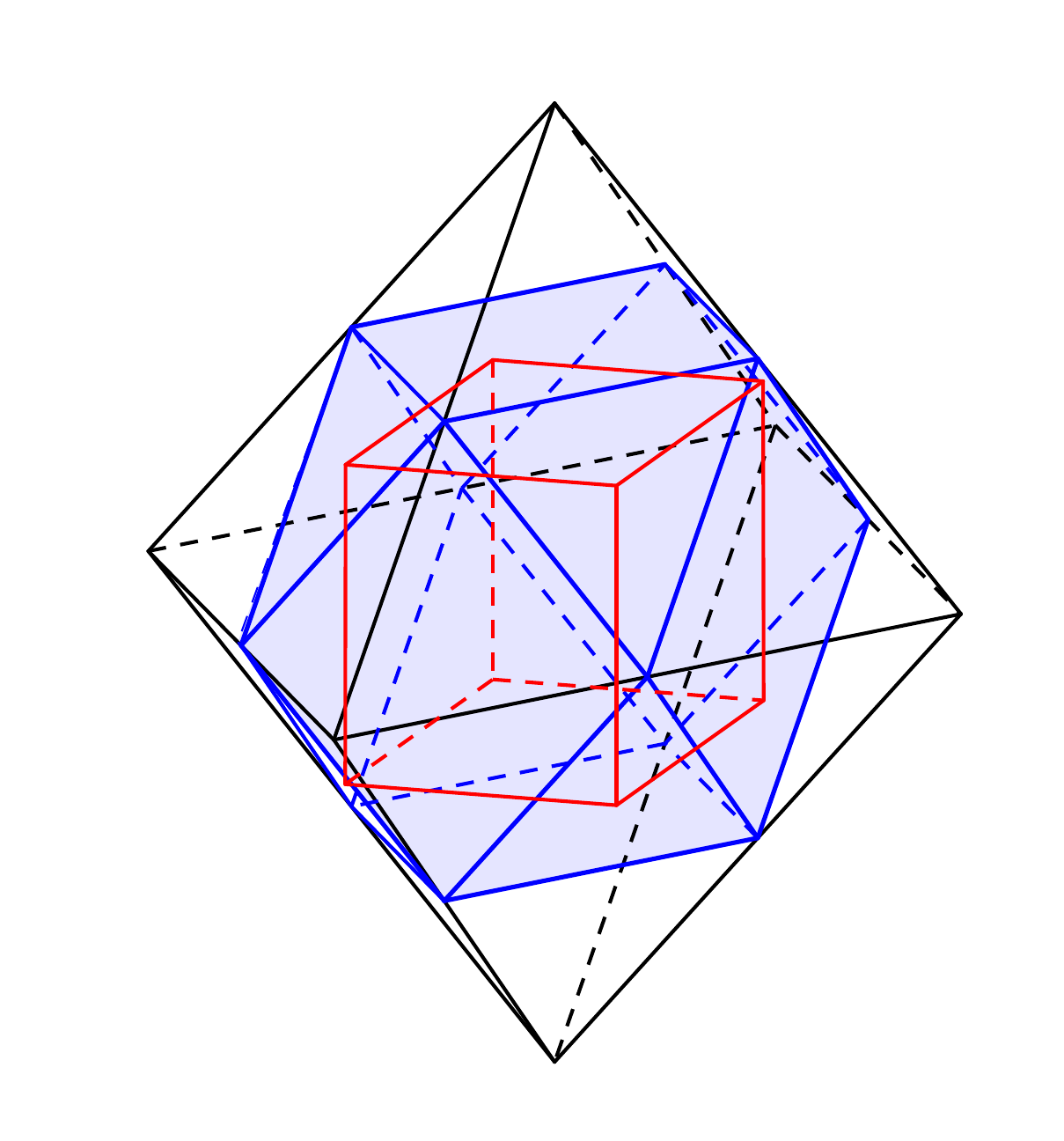}
            \caption{A cube optimally contained in an octahedron, and a cuboctahedron fitting in between.}\label{1}
  \end{figure}

\medskip

Next we state two propositions taken from \cite[Lemma 2.5 and Corollary 2.10]{BrGo2} and characterizing pseudo-completeness
(which becomes completeness in the simplex case).

%We exhibit in the next result (\cf~also Lemma 2.5 in \cite{BrGo2}),
%different characterizations of pseudo-completeness.

\begin{prop}\label{th:pseudocomplete}
  Let $K,C\in\CK^n$ with $K$ being Minkowski-centered and $C=-C$. Then the following are equivalent:
\begin{enumerate}[(i)]
\item $K-K \subset D(K,C)C \subset (s(K)+1)(K \cap (-K))$.
\item $K$ is pseudo-complete with respect to $C$, i.e., $D(K,C)=r(K,C)+R(K,C)$.
\end{enumerate}
Moreover, if $K$ is complete with respect to $C$, then $K$ satisfies any of the conditions above,
and any of them implies $R(K,C)/D(K,C)=s(K)/(s(K)+1)$ (\cf \cite[Corollary 6.3]{BrK2}).
\end{prop}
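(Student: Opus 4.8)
The plan is to reduce both conditions to a pair of radius inequalities valid for \emph{every} Minkowski-centered $K$ with $C=-C$, and then to recognise (i) and (ii) as their common equality case. Throughout write $s=s(K)$, $D=D(K,C)$, $R=R(K,C)$, $r=r(K,C)$. Since $C=-C$ gives $b_u(K,C)=h(K-K,u)/h(C,u)$ and hence $D=\max_{u\neq0}h(K-K,u)/h(C,u)$, the inclusion $K-K\subseteq DC$ holds automatically, and $K-K\subseteq(s+1)(K\cap(-K))$ is the right-hand optimal containment of Corollary \ref{cor:AsymCont} (valid for general $K$ by the remark following it). Thus (i) carries no information beyond the middle inclusion $DC\subseteq(s+1)(K\cap(-K))$, which—dividing by $s+1$ and using $C=-C$—is equivalent to the origin-centered inclusion $\tfrac{D}{s+1}C\subseteq K$ (equivalently $\tfrac{D}{s+1}C\subseteq K\cap(-K)$).

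First I would record the two bounds. By the chain of Corollary \ref{cor:AsymCont}, $\conv(K\cup(-K))\subseteq\tfrac{s}{s+1}(K-K)\subseteq\tfrac{s}{s+1}DC$, and $K\subseteq\conv(K\cup(-K))$, so $K\subseteq\tfrac{s}{s+1}DC$ and therefore $R\leq\tfrac{s}{s+1}D$, with the origin an admissible circumcenter. For the matching inradius bound I would apply Proposition \ref{prop:opt_hom} to $-K\subseteq^{\opt}sK$: it yields directions $a^1,\dots,a^m$ with $0\in\conv(\{a^1,\dots,a^m\})$ such that, after normalizing $h(K,a^i)=1$ (which preserves $0\in\conv(\{a^i\})$), one has $h(K,-a^i)=h(-K,a^i)=s$, hence $h(K-K,a^i)=s+1$ and, since $b_{a^i}(K,C)\leq D$, also $h(C,a^i)\geq\tfrac{s+1}{D}$. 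Testing any inball $c'+rC\subseteq K$ against the $a^i$ gives $c'^{T}a^i+r\,h(C,a^i)\leq1$; multiplying by the weights $\mu_i\geq0$ of a representation $0=\sum_i\mu_i a^i$, $\sum_i\mu_i=1$, and summing annihilates the $c'$-term and yields $r\leq\big(\sum_i\mu_i h(C,a^i)\big)^{-1}\leq\tfrac{D}{s+1}$. Hence $r+R\leq D$ always, so (ii) is exactly the equality case and forces $r=\tfrac{D}{s+1}$, $R=\tfrac{s}{s+1}D$; this already proves the ``Moreover'' identity $R/D=s/(s+1)$.

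To prove (i)$\Rightarrow$(ii) I would upgrade the circumradius bound. From $DC\subseteq(s+1)(K\cap(-K))$ one gets $s+1=h(K-K,a^i)\leq D\,h(C,a^i)\leq(s+1)h(K\cap(-K),a^i)$, while $h(K\cap(-K),a^i)\leq h(K,a^i)=1$; hence $h(K\cap(-K),a^i)=1$ and $h(C,a^i)=\tfrac{s+1}{D}$, i.e. the $a^i$ are diametral. Now testing a circumball $K\subseteq c+RC$ against the opposite directions $-a^i$ gives $s=h(K,-a^i)\leq-c^{T}a^i+R\,h(C,a^i)$, and summing against the $\mu_i$ kills the center term and produces $R\geq\tfrac{s}{s+1}D$; with the upper bound, $R=\tfrac{s}{s+1}D$. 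Since (i) also furnishes the origin-centered inball $\tfrac{D}{s+1}C\subseteq K$, we get $r=\tfrac{D}{s+1}$ and $r+R=D$, which is (ii). Conversely, under (ii) both bounds are tight: the origin is then an optimal circumcenter (the bound $K\subseteq\tfrac{s}{s+1}DC$ is origin-centered and now attains $R$), and tightness of the inradius estimate forces the active normals $a^i$ to be diametral with $h(C,a^i)=\tfrac{s+1}{D}$ and the incenter $c'$ to satisfy $c'^{T}a^i=0$; together with $0\in\conv(\{a^i\})$ this pins the common center at the Minkowski center, giving $\tfrac{D}{s+1}C\subseteq K\cap(-K)$, i.e. (i).

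Finally, the clause ``$K$ complete $\Rightarrow$ (i),(ii)'' is immediate: for centrally symmetric $C$ every complete body is pseudo-complete, hence satisfies (ii) (see \cite{MSch}), and the equivalence just established transports this to (i) and to the ratio formula. I expect the genuine obstacle to lie in the centering step of (ii)$\Rightarrow$(i): whereas the origin is automatically an optimal circumcenter, the equality analysis only delivers $c'^{T}a^i=0$ for the \emph{active} touching normals, so concluding that the origin itself supports the full inball $\tfrac{D}{s+1}C\subseteq K$ requires these normals to span $\R^n$. The degenerate case where they span only a proper subspace has to be treated separately—most naturally by arguing that the inscribed and circumscribed copies of $C$ may be taken concentric at the Minkowski center, or by a limiting/general-position argument in the orthogonal complement of $\mathrm{span}(\{a^i\})$.
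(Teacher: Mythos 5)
First, a point of comparison that is mostly moot: the paper does not prove this proposition at all — it is imported verbatim from \cite[Lemma 2.5 and Corollary 2.10]{BrGo2} — so there is no in-paper argument to measure your route against. Judged on its own, most of your proposal is correct and well organized: the observation that both outer inclusions in (i) hold automatically, so that (i) reduces to $\tfrac{D}{s+1}C\subseteq K$; the two bounds $R\leq\tfrac{s}{s+1}D$ (with the origin as an admissible circumcenter, via the containment chain) and $r\leq\tfrac{D}{s+1}$ (via the touching normals of $-K\subset^{\opt}sK$ and a weighted sum killing the center term); the resulting inequality $D\geq r+R$ and the identification of (ii) as its equality case, which immediately yields the \enquote{Moreover} ratio $R/D=s/(s+1)$; the implication (i)$\Rightarrow$(ii), where the extra containment forces the $a^i$ to be diametral and hence $R\geq\tfrac{s}{s+1}D$; and the reduction of \enquote{complete $\Rightarrow$ (i),(ii)} to the cited fact that complete bodies are pseudo-complete for symmetric $C$.

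The gap you flag at the end is, however, genuine, and it sits exactly where the content of (ii)$\Rightarrow$(i) lies. Your equality analysis shows only that an incenter $c'$ satisfies $(c')^{T}a^i=0$ for those touching normals $a^i$ carrying positive weight in one representation $0=\sum_i\mu_i a^i$; Proposition \ref{prop:opt_hom} guarantees as few as $m=2$ such normals, so in general they span only a line, and the conclusion $c'=0$ (equivalently $\tfrac{D}{s+1}C\subseteq K$, which is all that (i) amounts to) does not follow. Knowing that the origin is an optimal circumcenter does not close this, because neither incenters nor circumcenters of a pseudo-complete body are unique a priori, so even the standard consequence of $D=r+R$ that \emph{some} incenter is also a circumcenter does not place an incenter at the Minkowski center. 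The two repairs you sketch — that the inscribed and circumscribed copies of $C$ \enquote{may be taken concentric at the Minkowski center}, or a limiting/general-position argument in the orthogonal complement of $\mathrm{span}\{a^1,\dots,a^m\}$ — are precisely the missing lemma rather than a proof of it. As written, the direction (ii)$\Rightarrow$(i) is unproved; everything else goes through.
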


%We now take a closer look onto the simplex case $S$. In particular, we describe
%when the simplex $S$ can be complete or reduced. Next proposition
%is taken from Corollary 2.10 in \cite{BrGo2}.

\begin{prop}\label{prop:Tcomplete}
Let $S,C\in\CK^n$ with $S$ being a Minkowski-centered $n$-simplex and $C=-C$.
Then the following are equivalent:
\begin{enumerate}[(i)]
\item $D(S,C)=r(S,C)+R(S,C)$.
\item $S-S\subset D(S,C)C \subset (n+1)(S \cap (-S))$.
\item $S$ is complete with respect to $C$.
\item $R(S,C)/D(S,C)=n/(n+1)$ (equality case in Bohnenblust's inequality, \cf~\cite{Bo,Le}).
\end{enumerate}
\end{prop}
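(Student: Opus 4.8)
The plan is to obtain most of the equivalences directly from Proposition \ref{th:pseudocomplete} after computing the Minkowski asymmetry of a simplex, and then to supply the two genuinely simplex-specific implications by hand. First I would record that every $n$-simplex satisfies $s(S)=n$ and has its Minkowski center at its centroid; thus, writing $S=\conv\{v^0,\dots,v^n\}$ Minkowski-centered, we have $\sum_j v^j=0$, and for the outer facet normals $a^i$ normalized by $h(S,a^i)=(a^i)^Tv^j=1$ for $j\neq i$ one computes $(a^i)^Tv^i=-n$. Substituting $s(S)=n$ into Proposition \ref{th:pseudocomplete} turns its first condition into our (ii) and its second into our (i), giving (i)$\Leftrightarrow$(ii); its clause ``if $K$ is complete then it satisfies these conditions'' gives (iii)$\Rightarrow$(i); and its clause ``any of them implies $R/D=s/(s+1)$'' gives (i)$\Rightarrow$(iv). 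It then remains to prove (ii)$\Rightarrow$(iii) and to close the cycle with (iv)$\Rightarrow$(ii).

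The conceptual heart of the statement -- that for a simplex pseudo-completeness already forces completeness -- I would establish through the spherical intersection property of Lemma \ref{l:gauge}\eqref{p:SIP}, which for $C=-C$ reads: $S$ is complete iff $S=\bigcap_{x\in\bd(S)}(x+D(S,C)C)$. The inclusion ``$\subseteq$'' is automatic because $D(S,C)$ is the diameter. For ``$\supseteq$'', take $p\notin S$; then $(a^i)^Tp>1=h(S,a^i)$ for some facet normal $a^i$, so $(a^i)^T(p-v^i)>1+n$. Applying the support function in direction $a^i$ to the right-hand containment in (ii) gives $D(S,C)h(C,a^i)\le(n+1)h(S\cap(-S),a^i)\le(n+1)h(S,a^i)=n+1$. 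Hence $(a^i)^T(p-v^i)>D(S,C)h(C,a^i)$, so $p-v^i\notin D(S,C)C$ and therefore $p\notin v^i+D(S,C)C$. Thus the intersection equals $S$, and $S$ is complete.

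To close the cycle I would prove (iv)$\Rightarrow$(ii) as the equality case of Bohnenblust's inequality, whose short proof I would extract from Corollary \ref{cor:AsymCont}: the chain $S\subseteq\conv(S\cup(-S))\subseteq\frac{n}{n+1}(S-S)$ together with $R(S-S,C)=D(S,C)$ yields $R(S,C)\le\frac{n}{n+1}D(S,C)$. Equality in (iv) forces equality throughout, which pins the circumcenter of $S$ to the origin (so that $S\subset^{\opt}R(S,C)C$ with $R(S,C)C$ being $0$-symmetric) and makes $\conv(S\cup(-S))\subseteq\frac{n}{n+1}(S-S)$ circumradius-tight. Reading off the touching points of $S\subset^{\opt}R(S,C)C$ from Proposition \ref{prop:opt_hom} and combining them with the facet--facet touching of Corollary \ref{cor:AsymCont} should then produce the missing containment $D(S,C)C\subseteq(n+1)(S\cap(-S))$, i.e.\ (ii).

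The main obstacle is precisely this last step. Whereas (ii)$\Rightarrow$(iii) collapses to a one-line support-function estimate once the spherical intersection property is granted, recovering the containment (ii) from the bare ratio equality (iv) requires the full extremal analysis of Bohnenblust's inequality, and it is here that the rigidity of the simplex (all $n+1$ vertices on the circumsphere, each touching the corresponding facet of $(n+1)(S\cap(-S))$) is indispensable; for the delicate equality discussion I would lean on \cite{BrGo2} and \cite[Corollary 6.3]{BrK2}.
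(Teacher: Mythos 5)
The paper does not prove this proposition at all---it is imported verbatim from \cite[Lemma 2.5 and Corollary 2.10]{BrGo2}---so your attempt is necessarily a different route, and most of it is sound. The bookkeeping via $s(S)=n$ and Proposition \ref{th:pseudocomplete} correctly yields (i)$\Leftrightarrow$(ii), (iii)$\Rightarrow$(i), and (i)$\Rightarrow$(iv), and your proof of (ii)$\Rightarrow$(iii) through the spherical intersection property is complete and correct: with the normalization $h(S,a^i)=1$, $(a^i)^Tv^i=-n$, any $p\notin S$ violates some facet inequality, and the right-hand containment in (ii) gives $h(D(S,C)C,a^i)\le n+1<(a^i)^T(p-v^i)$, so $p\notin v^i+D(S,C)C$. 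Note that you only need the right-hand containment there, since $S-S\subseteq D(S,C)C$ is automatic from the definition of the diameter; this localizes exactly where the simplex structure enters.

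The genuine gap is (iv)$\Rightarrow$(ii), which you only sketch (``should then produce the missing containment'') and partially outsource to the references. The step is fillable, but the two decisive observations are missing from your outline. Normalize $D=n+1$, $R=n$; as you say, $S\subseteq\frac{n}{n+1}(S-S)\subseteq RC$ pins the circumcenter to $0$, and (ii) reduces to $h(C,a^i)\le 1=h(S,a^i)$ for every facet normal, i.e.\ to showing the diameter is attained in \emph{every} facet-normal direction. First, since any hyperplane supporting both $S$ and $RC$ at a common point must also support the sandwiched body $\frac{n}{n+1}(S-S)$, and $h(S,u)=\frac{n}{n+1}h(S-S,u)$ forces $\max_j u^Tv^j=-n\min_j u^Tv^j$, which for a simplex (using $\sum_j v^j=0$) happens only for $u\propto -a^i$ with touching point $v^i$, the touching pairs of $S\subset^{\opt}RC$ from Proposition \ref{prop:opt_hom} can only be $(v^i,-a^i)$. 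Second, because any $n$ of the facet normals of a simplex are linearly independent, $0\in\conv(\{-a^i:i\in I\})$ forces $I=[n+1]$, so \emph{all} $n+1$ pairs are active; this gives $h(RC,-a^i)=(-a^i)^Tv^i=n$, hence $h(C,a^i)=1$ for every $i$ and $D(S,C)C\subseteq(n+1)(S\cap(-S))$. Without these two points your equality discussion does not close, since $R=\frac{n}{n+1}D$ alone gives only the lower bounds $h(C,a^i)\ge 1$, which point in the wrong direction.
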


%Next proposition is Corollary 7 in \cite{LM2}.
The proposition below is taken from \cite[Corollary 7]{LM2} and shows a quite similar structure for the reducedness
of simplices as the one given in Proposition \ref{prop:Tcomplete} for completeness.

\begin{prop}\label{prop:redS}
Let $S,C\in\CK^n$ with $S$ being an $n$-simplex and $C=-C$. Then the following are equivalent:
\begin{enumerate}[(i)]
\item $S$ is reduced with respect to $C$.
\item $w(S,C)C\subset S-S$ touches all facets of $S-S$ with outer normals parallel to outer normals of facets of $\pm S$.
\end{enumerate}
\end{prop}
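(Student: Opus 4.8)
The plan is to translate both statements into the language of the inscribed ball of $S-S$ and then argue by forcing inclusions. Since $C=-C$, we have $w(S,C)h(C,s)\le h(S-S,s)$ for every $s$, with equality exactly in the \emph{minimal-width directions}; let $W$ denote this set of directions. Hence $w(S,C)C$ is the largest homothet of $C$ centered at the origin contained in $S-S$, and it touches the facet of $S-S$ with outer normal $u$ if and only if $u\in W$, \ie, $b_u(S,C)=w(S,C)$. Writing $S=\conv\{v_0,\dots,v_n\}$ and letting $F_i$ be the facet opposite $v_i$ with outer normal $a^i$, the proof of Corollary \ref{cor:AsymCont} shows that the facets of $S-S$ whose normals are parallel to a facet normal of $\pm S$ are exactly the $2(n+1)$ translates $\pm(F_i-v_i)$, with normals $\pm a^i$. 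Thus (ii) is equivalent to $a^i\in W$ for all $i$, \ie, to the statement that every facet-normal direction is a minimal-width direction.

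For (ii)$\Rightarrow$(i) I would argue directly. Let $K'\subseteq S$ be convex with $w(K',C)=w(S,C)$. For each $i$, monotonicity of breadth under inclusion together with (ii) gives $w(S,C)=w(K',C)\le b_{a^i}(K',C)\le b_{a^i}(S,C)=w(S,C)$, so $b_{a^i}(K',C)=b_{a^i}(S,C)$; as each of the two support terms is non-increasing under $K'\subseteq S$, this forces $h(K',-a^i)=h(S,-a^i)$. Since $v_i$ is the \emph{unique} minimizer of $(a^i)^Tx$ over $S$, it is the only point of $S$ attaining $h(S,-a^i)$, whence $v_i\in K'$. As this holds for every $i$, we obtain $K'\supseteq\conv\{v_0,\dots,v_n\}=S$, so $K'=S$, and $S$ is reduced.

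For (i)$\Rightarrow$(ii) I would prove the contrapositive: assuming $b_{a^{i_0}}(S,C)>w(S,C)$ for some $i_0$, I would exhibit a proper convex subset of $S$ of the same minimal width. The natural candidate is the shaved simplex $K'=S\cap H^{\le}_{-a^{i_0},\,h(S,-a^{i_0})-\varepsilon}$, which removes a small cap at $v_{i_0}$. Its support function differs from that of $S$ only in those directions $s$ for which $v_{i_0}$ is the strict maximizer of $S$, that is, only in the interior of the normal cone $N_{i_0}=\{s:s^Tv_{i_0}=h(S,s)\}$; consequently $b_s(K',C)<b_s(S,C)$ can occur only for $s\in\inte N_{i_0}$ or $s\in-\inte N_{i_0}$, and in all other directions the breadth is unchanged and still at least $w(S,C)$. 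It therefore suffices to show that for small $\varepsilon$ these breadths also stay $\ge w(S,C)$, which reduces to proving that no minimal-width direction lies in the interior of $N_{i_0}$.

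The main obstacle is exactly this last claim: I must show that $b_{a^{i_0}}(S,C)>w(S,C)$ forces $W\cap\inte N_{i_0}=\emptyset$, equivalently that $W\cap\inte N_{i_0}\ne\emptyset$ implies $a^{i_0}\in W$. On $N_{i_0}$ the numerator simplifies to the convex, piecewise-linear function $h(S-S,s)=\max_{j\ne i_0}s^T(v_{i_0}-v_j)$, and $-a^{i_0}$ is the distinguished interior ray balanced against all edge vectors $v_{i_0}-v_j$; I expect the minimum of $b_s(S,C)=h(S-S,s)/h(C,s)$ over $N_{i_0}$ to be governed by its value at $-a^{i_0}$ together with the inradius-optimality condition $0\in\conv(W)$, obtained by applying Proposition \ref{prop:opt_hom} to $C\subset^{\opt}w(S,C)^{-1}(S-S)$ (a pointed cone cannot contain a positively spanning set, so $W$ must meet several cones). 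The delicate point is the boundary case, where a minimal-width direction sits on $\partial N_{i_0}$ rather than in its interior — precisely what happens for non-reduced triangles, and what separates mere attainment of the infimum from genuine interior attainment; resolving this, and then choosing $\varepsilon$ by a routine compactness argument over directions bounded away from $W$, completes the proof.
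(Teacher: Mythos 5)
First, note that the paper does not actually prove this proposition: it is imported verbatim from \cite[Corollary 7]{LM2}, so there is no in-paper argument to compare against; your attempt has to be judged on its own. Your reduction of (ii) to the condition $b_{a^i}(S,C)=w(S,C)$ for all facet normals $a^i$ of $S$ is correct, and your proof of (ii)$\Rightarrow$(i) is complete and clean: equality of $b_{a^i}$ on $K'$ and $S$ splits into equality of the two support values, and $h(K',-a^i)=h(S,-a^i)$ recovers the vertex $v_i$ because it is the unique maximizer of $-a^i$ over the simplex; hence $K'=S$.

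The direction (i)$\Rightarrow$(ii), however, contains a genuine gap, and it sits exactly where you place it. You need the implication \enquote{$b_{a^{i_0}}(S,C)>w(S,C)$ forces $W\cap\inte N_{i_0}=\emptyset$}, but you only \emph{expect} it, and the tool you propose --- the optimality condition $0\in\conv(W)$ coming from Proposition \ref{prop:opt_hom} applied to $w(S,C)C\subset^{\opt}S-S$ --- cannot deliver it: that condition says the width directions positively span $\R^n$, but it gives no control on their position relative to a single normal cone $N_{i_0}$. The mechanism that actually works is different: the normal fan of $S-S$ refines that of $S$, and on each of its cones $\tau=N_{\{v_{i_0}\}}\cap(-N_G)$ (with $G$ a face of $F_{i_0}$) the function $h(S-S,\cdot)$ is linear, so $g:=h(S-S,\cdot)-w(S,C)\,h(C,\cdot)$ is concave and non-negative on $\tau$; a non-negative concave function vanishing at a point of $\relint\tau$ vanishes on all of the closed cone $\tau$, and \emph{every} such $\tau$ contains $-a^{i_0}$, since $a^{i_0}$ lies in the normal cone of every face of $F_{i_0}$. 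Hence any width direction in $\inte N_{i_0}$ already forces $-a^{i_0}\in W$, which is the contrapositive you need. Moreover, your closing step is not the \enquote{routine compactness argument over directions bounded away from $W$} you describe: even when $W\cap\inte N_{i_0}=\emptyset$, the set $W$ will in general meet $\bd(N_{i_0})$ (it must, if the other heights equal the width), so the slack $g(s)$ and the loss $h(S,s)-h(K',s)$ both tend to $0$ as $s$ approaches $\bd(N_{i_0})$, and one has to compare their rates. This again requires the cone decomposition: writing $s\in N_{i_0}\cap(-N_j)$ in terms of $-a^{i_0}$ and boundary generators, concavity of $g$ gives $g(s)\gtrsim g(-a^{i_0})\cdot(\text{the }{-a^{i_0}}\text{-component of }s)$, while the loss from the cut is at most $\eps$ times the same quantity; only then does a fixed small $\eps$ work. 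As written, the proposal proves one implication and leaves the other at the level of a correctly identified but unresolved obstacle.
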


Putting these two propositions referring to simplices together, we obtain our first theorem.

\begin{thm}
  Let $S,C\in\CK^n$, such that $S$ is a complete and reduced simplex with respect to $C$. Then $S$ is of constant width with respect to $C$.
\end{thm}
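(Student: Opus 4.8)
The plan is to reduce everything to the centrally symmetric situation and then to play the completeness chain against the reducedness touching conditions along a common family of facet normals. First I would invoke Lemma \ref{l:gauge}\eqref{p:KcwC-C}, \eqref{p:KcomC-C}, \eqref{p:KredC-C} to replace $C$ by $C-C$: since constant width, completeness, and reducedness of $S$ with respect to $C$ are each equivalent to the respective property with respect to the symmetric body $C-C$, it suffices to prove the claim under the additional assumption $C=-C$. Being a simplex is unaffected, and all three properties are translation invariant, so I may also translate $S$ so that it becomes Minkowski-centered; recall that an $n$-simplex satisfies $s(S)=n$. (If $S$ were not full-dimensional, then $w(S,C)=0$ and $S$ could not be reduced, so $S$ is a genuine $n$-simplex, and $C$ may be taken as a full-dimensional symmetric body with $0\in\inte(C)$.)

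Next I would extract the two one-sided estimates. Since $S$ is complete, Proposition \ref{prop:Tcomplete} gives the inclusion chain
\[
S-S \subset D(S,C)\,C \subset (n+1)(S\cap(-S)),
\]
so that $h(S-S,u)\le D(S,C)\,h(C,u)\le (n+1)\,h(S\cap(-S),u)$ for every $u$. On the other hand, Corollary \ref{cor:AsymCont} applied to $P=S$ (with $s(S)=n$) produces facet normals $a^1,\dots,a^m$ of $S$ (for some $2\le m\le n+1$) for which the facet of $S-S$ with outer normal $a^i$ lies in the corresponding facet of $(n+1)(S\cap(-S))$, that is,
\[
h(S-S,a^i) = (n+1)\,h(S\cap(-S),a^i), \quad i\in[m].
\]
Feeding this equality into the chain above forces the two outer terms to agree, and hence collapses it to $D(S,C)\,h(C,a^i)=h(S-S,a^i)$ for each such $a^i$.

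It remains to bring in reducedness. Because each $a^i$ is a facet normal of $S$, hence of $\pm S$, Proposition \ref{prop:redS} guarantees that the inscribed homothet $w(S,C)\,C\subset S-S$ touches the facet of $S-S$ with outer normal $a^i$, which is precisely the statement $w(S,C)\,h(C,a^i)=h(S-S,a^i)$. Comparing the two evaluations yields $w(S,C)\,h(C,a^i)=D(S,C)\,h(C,a^i)$, and since $0\in\inte(C)$ forces $h(C,a^i)>0$ for the nonzero vector $a^i$, I conclude $w(S,C)=D(S,C)$. By definition this means $S$ is of constant width with respect to $C=-C$, and by Lemma \ref{l:gauge}\eqref{p:KcwC-C} also with respect to the original $C$.

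The main obstacle I anticipate is the bookkeeping that makes the two propositions interlock: one must verify that the facet normals $a^i$ delivered by Corollary \ref{cor:AsymCont} (where the completeness chain is tight) are exactly of the type $\pm(\text{facet normal of }S)$ at which Proposition \ref{prop:redS} forces the reducedness touching, and that the homothety factor $s(S)+1$ appearing in the corollary coincides with the factor $n+1$ in Proposition \ref{prop:Tcomplete}, which is why the identity $s(S)=n$ for simplices is essential. Away from these finitely many directions, $S-S$, $D(S,C)\,C$, and $(n+1)(S\cap(-S))$ need not agree, so the argument genuinely relies on the coincidence of the tight directions of completeness and reducedness rather than on any global identity of bodies.
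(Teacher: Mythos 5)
Your proposal is correct and follows essentially the same route as the paper: reduce to symmetric $C$ via Lemma \ref{l:gauge}, combine the completeness chain of Proposition \ref{prop:Tcomplete} with the facet-in-facet containments of Corollary \ref{cor:AsymCont}, and play these against the touching conditions of Proposition \ref{prop:redS} at the common facet normals of $\pm S$. The only cosmetic difference is that you finish by evaluating support functions in a single tight direction $a^i$, whereas the paper phrases the same conclusion as the optimal containment $w(S,C)C\subset^{\opt}D(S,C)C$; both immediately give $w(S,C)=D(S,C)$.
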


\begin{proof}
  Without loss of generality, we can assume that $C$ is $0$-symmetric
  (see Lemma \ref{l:gauge} \eqref{p:KcwC-C}, \eqref{p:KcomC-C}, and \eqref{p:KredC-C}).
  The completeness of $S$ implies by Proposition \ref{prop:Tcomplete} that
  \[
  S-S\subseteq D(S,C)C\subseteq(n+1)(S\cap(-S))\,,
  \]
  and by Corollary \ref{cor:AsymCont} all facets of $S-S$ parallel to facets of $S$
  are contained in facets of $(n+1)(S\cap(-S))$.
  On the other hand, since $S$ is reduced, Proposition \ref{prop:redS} implies that
  $w(S,C)C\subseteq S-S$ with touching points in all facets of $S-S$ which are
  parallel to facets of $S$. Hence $w(S,C)C\subset^{\opt}D(S,C)C$, and thus $w(S,C)=D(S,C)$.
\end{proof}

There is a natural connection between the equality case in the inequality of Leichtweiss (see \cite{Le}) and reduced sets,
which is reflected in the following proposition.

\begin{prop}\label{prop:leichtT}
Let $S$ be a Minkowski-centered $n$-simplex. Then the following are equivalent:
\begin{enumerate}[(i)]
\item $w(S,C)/r(S,C)=n+1$ (equality case in Leichtweiss' inequality \cite{Le}).
\item $(1+1/n)\conv(S \cup (-S)) \subset w(S,C)C \subset S-S$, and $(1+1/n)S$ touches $S-S$ in all facets
with outer normals parallel to outer normals of facets of $\pm S$ in the points precisely given
by Corollary \ref{cor:AsymCont}.
\end{enumerate}
\end{prop}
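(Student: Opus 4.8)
The plan is to read the equivalence off the containment chain of Corollary~\ref{cor:AsymCont}. Since every body occurring in (ii) is centrally symmetric, I assume throughout that $C=-C$, as in Propositions~\ref{prop:Tcomplete} and \ref{prop:redS}. For a Minkowski-centered $n$-simplex one has $s(S)=n$, so the corollary yields
\[
\left(1+\tfrac1n\right)\conv(S\cup(-S))\subset^{\opt}S-S\subset^{\opt}(n+1)(S\cap(-S)),
\]
together with vertices $p^i$ and facet normals $a^i$ (with $0\in\conv\{a^i\}$) witnessing the facet--facet touching. Directly from the definitions, $w(S,C)=r(S-S,C)$; that is, $w(S,C)C$ is the largest homothet of $C$, centred at the origin, contained in $S-S$. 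Hence the inner inclusion $w(S,C)C\subset S-S$ of (ii) is automatic, and only the outer inclusion and the touching clause carry content.

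Intersecting the chain with the factor $1/(n+1)$ gives $\tfrac1{n+1}w(S,C)C\subset S\cap(-S)\subset S$, a centred homothet of $C$ inside $S$, so that $r(S,C)\ge\tfrac1{n+1}w(S,C)$. This is Leichtweiss' inequality, and equality in (i) holds exactly when this centred homothet is a \emph{largest} inscribed copy of $C$ in $S$, i.e.\ when the origin is an incentre of $S$ and $r(S,C)=\tfrac1{n+1}w(S,C)$.

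For (ii)$\Rightarrow$(i) the touching clause does the work. If $w(S,C)C$ meets each facet of $S-S$ with normal $a^i$, then $w(S,C)\,h(C,a^i)=h(S-S,a^i)$ for all $i$, so after scaling by $1/(n+1)$ the centred ball $\tfrac1{n+1}w(S,C)C$ touches every facet $F_i$ of $S$ with outer normal $a^i$. Because $0\in\conv\{a^i\}$, no translation can pull this ball off all facets simultaneously; it is therefore a largest inscribed homothet, and $r(S,C)=\tfrac1{n+1}w(S,C)$, which is (i).

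The converse (i)$\Rightarrow$(ii) is the main obstacle. From $r(S,C)=\tfrac1{n+1}w(S,C)$ one first recovers, as above, that the centred ball is maximal and hence touches facets whose normals positively span; since the only positively spanning subset of $\{a^0,\dots,a^n\}$ is the whole set, it touches all $n+1$ facets, which gives $w(S,C)\,h(C,a^i)=h(S-S,a^i)$ for every $i$ and thus the facet--level touching. What remains — and what I expect to be genuinely delicate — is to locate the contact point of $w(S,C)C$ on the facet of $S-S$ with normal $a^i$ at the vertex $-(1+\tfrac1n)p^i$ supplied by Corollary~\ref{cor:AsymCont}, and thereby to obtain the outer inclusion $(1+\tfrac1n)\conv(S\cup(-S))\subset w(S,C)C$. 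Here I would use that each such vertex already lies on that facet of $S-S$ (by the corollary) while $w(S,C)C$ is tangent to it, so the vertex lies on a supporting hyperplane of $w(S,C)C$ and can belong to $w(S,C)C$ only if it \emph{is} the contact point. The pivotal and most subtle step is precisely to show that equality in Leichtweiss forces this coincidence — equivalently, that the contacts occur at the facet centroids $-\tfrac1n p^i$ rather than at some off-centre point of each facet; this is the step on which the whole equivalence, and the correct interpretation of the equality case, turns.
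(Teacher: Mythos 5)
The paper states Proposition~\ref{prop:leichtT} without any proof, so there is no argument of the authors to compare yours against; your proposal has to stand on its own. Your direction (ii)$\Rightarrow$(i) is sound: the outer inclusion forces $w(S,C)\,h(C,a^i)=h(S-S,a^i)=(n+1)h(S,a^i)$ for every facet normal $a^i$ of $S$, hence the centred copy $\tfrac{w(S,C)}{n+1}C$ touches all $n+1$ facets of $S$, and since the facet normals of a simplex positively span, Proposition~\ref{prop:opt_hom} yields $r(S,C)=w(S,C)/(n+1)$. For (i)$\Rightarrow$(ii), however, you establish only the facet-level touching and then stop; the argument you sketch for locating the touching point at $\pm(1+\tfrac1n)p^i$ is circular, since it presupposes that this vertex lies in $w(S,C)C$ --- which is exactly the outer inclusion to be proved.

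That gap cannot be closed, because the implication (i)$\Rightarrow$(ii) is false as stated. Take $n=2$ and $S=\conv(\{(0,2),(-\sqrt{3},-1),(\sqrt{3},-1)\})$, which is Minkowski-centered with facet normals $a^1=(0,-1)$, $a^2=(\tfrac{\sqrt{3}}{2},\tfrac12)$, $a^3=(-\tfrac{\sqrt{3}}{2},\tfrac12)$ satisfying $h(S,a^i)=1$ and $h(S-S,a^i)=3$. Let $C=\conv(\{\pm(\tfrac{\sqrt{3}}{6},1),\pm(\tfrac{\sqrt{3}}{2},\tfrac12),\pm(-\tfrac{\sqrt{3}}{2},\tfrac12)\})$, a $0$-symmetric hexagon with one vertex on each edge of the hexagon $\tfrac13(S-S)$. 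One checks $h(C,\pm a^i)=1$ for all $i$, so $C\subset^{\opt}S$ by Proposition~\ref{prop:opt_hom}, giving $r(S,C)=1$, and $C\subset\tfrac13(S-S)$ with equality of support functions in the directions $a^i$, giving $w(S,C)=\min_{s}h(S-S,s)/h(C,s)=3$. Thus $w(S,C)/r(S,C)=3=n+1$ and (i) holds. Yet $(1+\tfrac1n)p^1=(0,3)\notin 3C=w(S,C)C$, because the only point of $C$ on the line $x_2=1$ is $(\tfrac{\sqrt{3}}{6},1)$; so the inclusion $(1+\tfrac1n)\conv(S\cup(-S))\subset w(S,C)C$ fails. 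Equality in Leichtweiss' inequality only forces the values $h(C,a^i)$ to coincide and the width to be attained in the facet directions, i.e.\ it pins down the supporting hyperplanes of $w(S,C)C$ along the relevant facets of $S-S$, not the touching points. Your instinct that this is the delicate step was correct; the honest conclusion is that the step is not merely delicate but impossible, so either (ii) must be weakened (dropping the outer inclusion and the location of the contacts) or (i) strengthened for the stated equivalence to hold.
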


\begin{rem}
It is immediate to observe that any simplex $S$, satisfying any condition in Proposition \ref{prop:leichtT},
fulfils also Proposition \ref{prop:redS}, which means that it is reduced. However, the contrary is not true, \ie,
there exist reduced simplices $S$ such that $w(S,C)/r(S,C) < n+1$.
\end{rem}

% \begin{open}
% \textcolor{red}{Horst thinks this should be throw out or modified!}

% Are the following equivalent:
% \begin{enumerate}[(i)]
% \item $w(K,C)=r(K,C)+R(K,C)$.
% \item $(1+1/s(K))\conv(K\cup(-K))\subset w(K,C)C\subset K-Ki$.
% \end{enumerate}
% Surely each of these properties imply $w(K,C)=(1+s(K))r(K,C)$
% (the equality case of the extended Leichtweiss inequality, \cf~Proposition \ref{prop:BLBrK}).
% \end{open}
% We believe that they are equivalent. However, differing from the analogous
% equivalences collected in Proposition \ref{th:pseudocomplete}
% the reducedness of $K$ with respect to $C$ does not imply any of the two conditions above
% (to this end consider $K$ to be a quarter of a circle and $C$ the euclidean disc).

\begin{open}\label{op:realquestion}
If $K$ is complete and reduced with respect to $C$ for a Minkowski-centered convex body $K$, does this imply
\[\left(1+\frac{1}{s(K)}\right)\conv(K\cup(-K))\subset w(K,C)C?\]
\end{open}

If the answer to Open Question \ref{op:realquestion}  would be yes, then,
together with Proposition \ref{th:pseudocomplete} and the trivial containment $w(K,C)C\subset K-K$,
by Corollary \ref{cor:AsymCont} we would have that $w(K,C)C\subset^{\rm opt}D(K,C)C$,
and thus the answer to Open Question \ref{op:compltred} would be yes, too.

\begin{rem}
We offer another question, motivated by Proposition \ref{prop:redS}, which is maybe somehow
\enquote{closer to reality}: if $K$ is reduced
with respect to $C$, does there exist some set $K'$, related to $K$, such that $K'\subset w(K,C)C\subset K-K$
with nice \enquote{touching conditions}? For instance, remember that if $K$ is
a reduced simplex with respect to to a 0-symmetric $C$, then Proposition \ref{prop:redS} implies that
$w(K,C)C$ touches all facets of $K-K$ in points $\pm p^i$, $i\in[n+1]$, which are parallel to facets of $K$,
\ie, we could define $K':=\conv(\{\pm p^i:i\in[n+1]\})$), and $K'$ and $(1+1/n) \conv(K\cup(-K))$ would not be necessarily equal.
\end{rem}

The two subsequent propositions are taken from \cite[Corollary 1]{LM2} and \cite[Lemma 4]{MSch2}, respectively.

\begin{prop}\label{prop:Creduc}
Let $C=\conv(\{\pm q^1,\dots,\pm q^m\})$ be a 0-symmetric polytope,
and $K\in\CK^n$ be reduced with respect to $C$. Then $K$ is a polytope there exist $c^i \in \R^n$ such that
\[
K=\conv(\{c^i+(w(K,C)/2)[-q^i,q^i] : i \in [m]\})\,,
\]
and each segment $c^i+(w(K,C)/2)[-q^i,q^i]$ attains the width $w(K,C)$
of $K$ with respect to $C$.
\end{prop}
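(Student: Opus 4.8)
The plan is to reconstruct $K$ from its width-attaining chords and to use the polytopal shape of $C$ to force these chords into the diagonal directions $[-q^i,q^i]$. By Lemma~\ref{l:gauge}\eqref{p:KredC-C} I may assume throughout that $C=-C$, so that $C-C=2C$ and
\[
w:=w(K,C)=\min_{s\neq 0}\frac{h(K-K,s)}{h(C,s)}.
\]
Since $b_s(K,C)\ge w$ for every $s$ translates into $h(K-K,s)\ge w\,h(C,s)=h(wC,s)$, domination of support functions yields the inclusion $wC\subseteq K-K$, with equality $h(K-K,s)=w\,h(C,s)$ holding precisely in the minimal-width directions, \ie, where $wC$ and $K-K$ touch.

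First I would extract the chord structure from reducedness. By Lemma~\ref{l:gauge}\eqref{p:ReduExt}, each $x\in\ext(K)$ is an endpoint of a chord $[x,y_x]\subseteq K$ with $b_s([x,y_x],C)=b_s(K,C)=w$ for some minimal-width direction $s$. Using the segment identity $b_s([x,y_x],C)=\lvert s^T(x-y_x)\rvert/h(C,s)$ and orienting $s$ so that $s^T(x-y_x)\ge 0$, I obtain $s^T(x-y_x)=w\,h(C,s)=h(wC,s)$; hence $v:=x-y_x$ lies on the hyperplane $\{z:s^Tz=h(wC,s)\}$, which supports both $wC$ and $K-K$, these two bodies touching in direction $s$. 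Moreover $v$ belongs to the exposed face of $K-K$ in direction $s$.

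The crux is to upgrade this to $v=\pm w\,q^i$ for some $i\in[m]$, \ie, that $[x,y_x]$ is a translate of $(w/2)[-q^i,q^i]$. Here the polytopal shape of $C$ enters: the face of $wC$ exposed by $s$ is a face of the polytope $wC$, whose vertices lie among the $\pm wq^i$. If $v$ pointed in none of the vertex directions $\pm q^i$, then the exposed face of $K-K$ containing $v$ would extend beyond the vertex set of $wC$, so $x$ would sit behind a flat facet of $C$ exposed by $s$; one could then trim from $K$ a small cap near $x$ bounded by a hyperplane parallel to that facet, and the trimmed body would still meet every minimal-width supporting slab and hence retain width $w$ while being a proper subset of $K$, contradicting reducedness. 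I expect this to be the main obstacle, since it is exactly where the flatness of $\bd(C)$ and the minimality encoded in reducedness must be played off against each other, and a careful account of which supporting slabs survive the trimming is required. The conclusion is that every extreme point of $K$ lies on a chord that is a translate of $(w/2)[-q^i,q^i]$.

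Finally I would assemble $K$. As $K=\conv(\ext(K))$ and each extreme point lies on one of the chords just described, $K$ is the convex hull of segments parallel to the finitely many diagonals $[-q^i,q^i]$, so $K$ is a polytope. Reducedness forbids two distinct parallel width-chords in the same direction, since otherwise the convex hull of $K$ with only one of them, suitably pared, would already have width $w$; hence there is at most one segment $c^i+(w/2)[-q^i,q^i]$ per direction. That all $m$ directions actually occur follows by running the trimming argument in reverse: were $wq^i$ interior to $K-K$, every direction in the normal cone of $wC$ at $wq^i$ would give breadth strictly larger than $w$, permitting a width-preserving contraction of $K$ and again contradicting reducedness. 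Therefore $K=\conv(\{c^i+(w/2)[-q^i,q^i]:i\in[m]\})$, and each listed segment attains the width, since $b_s(c^i+(w/2)[-q^i,q^i],C)=w\,\lvert s^Tq^i\rvert/h(C,s)=w$ in the direction $s$ exposing $q^i$ on $C$.
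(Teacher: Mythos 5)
The paper does not prove this proposition at all: it is imported verbatim from \cite[Corollary 1]{LM2} (Lassak--Martini), so there is no in-paper argument to compare yours against, and your attempt has to stand on its own. Its overall architecture (width-chords through extreme points, forcing them into the directions $q^i$, then taking the convex hull) is the right one, but the central step is missing, and you say so yourself. Concretely: Lemma~\ref{l:gauge}\eqref{p:ReduExt} only gives you $s^T(x-y_x)=w\,h(C,s)$, i.e.\ $v=x-y_x\in F(K-K,s)$, the face of $K-K$ exposed by $s$. It does \emph{not} give $\gamma_C(v)=w$, so $v$ need not lie in $wC$ at all (e.g.\ a chord of $[0,1]^2$ from $(0,0)$ to $(1,1)$ attains breadth $1$ in direction $e^1$ for the cross-polytope gauge while having norm $2$). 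Your sentence about the exposed face of $K-K$ ``extending beyond the vertex set of $wC$'' conflates $F(K-K,s)$ with $F(wC,s)$; since $F(K-K,s)$ can be strictly larger, nothing yet places $v$ near, let alone at, a point $\pm wq^i$. The trimming argument that is supposed to close this gap is only gestured at: to cut a cap at $x$ and keep the width one must verify $b_s\ge w$ \emph{simultaneously} for all directions --- a compactness argument handles the directions with $b_s(K,C)>w+\eta$, but for the minimal directions one needs to exhibit, inside the trimmed body, a chord still attaining breadth $w$, and that is precisely the assertion you are trying to prove. You correctly flag this as ``the main obstacle,'' which is an admission that the proof is not there.

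The last paragraph has the same character. That at most one segment per direction is needed, and that \emph{all} $m$ directions actually occur, are each disposed of in one sentence by appeal to an unconstructed ``width-preserving contraction''; these are genuine claims requiring proof (the second is equivalent to the statement, recorded in Remark~\ref{rem:ComplRedPolyt}, that every vertex $\pm wq^i$ of $wC$ is realized by a width-attaining chord of $K$, and it is what makes $b_s(\mathrm{conv}(\bigcup_i J_i),C)\ge w$ for \emph{every} $s$, since $h(C,s)=\max_i|s^Tq^i|$). Finally, ``each segment attains the width'' needs not only $b_s(J_i,C)=w$ for $s$ in the normal cone of $C$ at $q^i$ but also $b_s(K,C)=w$ for such an $s$, which you do not check. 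A workable route (close to Lassak--Martini's) is: for a minimal direction $s$ one has $wF(C,s)\subset F(K-K,s)=F(K,s)-F(K,-s)$, so each vertex $q^i\in F(C,s)$ already yields a chord $c^i+(w/2)[-q^i,q^i]\subset K$ attaining the width; one then shows every extreme point of $K$ is an endpoint of such a chord by the reduction/trimming argument, carried out in full. As it stands, the proposal is a plausible outline with the key lemma unproved.
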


\begin{prop}\label{prop:MSch}
  Let $C=\bigcap_{j\in[l]}H_{\pm a^j,1}^{\leq}$, $a^j\in\R^n$,
  $j\in[l]$, be a 0-symmetric polytope, and the polytope $K$ be complete
  with respect to $C$. Then there exist $d^j\in\R^n$, $j\in[l]$, such that
  \[
    K=\bigcap_{j\in[l]}\left(d^j+H_{\pm a^j,1}^{\leq}\right),
  \]
  and the diameter $D(K,C)$ is attained in every direction $a^j$, $j\in[m]$.
\end{prop}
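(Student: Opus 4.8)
The plan is to derive everything from the spherical intersection property of Lemma~\ref{l:gauge}\eqref{p:SIP}. Since $C=-C$ we have $C-C=2C$, and comparing breadths gives $D(K,C-C)=\tfrac12 D(K,C)$, so that $D(K,C-C)(C-C)=D(K,C)\,C$ and the spherical intersection property reads
\[
K=\bigcap_{x\in\bd(K)}\bigl(x+D(K,C)\,C\bigr).
\]
Abbreviating $D:=D(K,C)$ and inserting $C=\bigcap_{j\in[l]}H^{\le}_{\pm a^j,1}$, I would interchange the two intersections. For a fixed $j$ all translates $x+D\,H^{\le}_{\pm a^j,1}$, $x\in\bd(K)$, are slabs sharing the normal $a^j$, so their intersection collapses to the single slab
\[
S_j=\bigl\{y:\ h(K,a^j)-D\le (a^j)^{T}y\le D-h(K,-a^j)\bigr\},
\]
whose two bounds arise by maximising, resp.\ minimising, $(a^j)^{T}x$ over $x\in K$. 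Hence $K=\bigcap_{j\in[l]}S_j$ is a \emph{finite} intersection of slabs, each a translate of a dilate of the $j$-th slab of $C$. In particular every facet normal of $K$ lies in $\{\pm a^j:j\in[l]\}$, which is exactly the representation $K=\bigcap_{j\in[l]}(d^j+H^{\le}_{\pm a^j,1})$ once the $d^j$ are chosen; here the dilation factor $D/2$ is absorbed under the normalisation $D(K,C)=2$ that makes each diameter slab coincide in thickness with $H^{\le}_{\pm a^j,1}$.

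For the second assertion I would read off from the formula for $S_j$ that its upper bounding hyperplane supports $K$ precisely when $h(K,a^j)=D-h(K,-a^j)$, that is, when $h(K-K,a^j)=h(K,a^j)+h(K,-a^j)=D$; by symmetry the lower hyperplane then supports $K$ as well. Because $h(C,a^j)=1$ for a facet normal $a^j$, this condition is $b_{a^j}(K,C)=D(K,C)$, \ie, the diameter is attained in direction $a^j$. Conversely, any $S_j$ whose two bounding hyperplanes miss $K$ defines no facet and is therefore redundant in the intersection. Thus the slabs that genuinely contribute facets to $K$ are exactly those indexed by the diameter directions, and relabelling these as $j\in[m]$ yields the claim.

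The main difficulty I expect is bookkeeping rather than a conceptual hurdle. One has to (i) justify collapsing the infinite intersection over $\bd(K)$ to the finite intersection $\bigcap_{j}S_j$ and argue that the latter has no facet normal outside $\{\pm a^j\}$; (ii) carry the dilation factor $D/2$ through, so that in the diameter directions $S_j$ becomes a translate of $H^{\le}_{\pm a^j,1}$, which forces the normalisation $D(K,C)=2$ underlying the stated representation; and (iii) confirm that the non-diameter directions give genuinely redundant slabs (their bounding hyperplanes lie strictly outside $K$), so that the diameter directions and the facet directions of $K$ coincide.
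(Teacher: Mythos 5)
The paper does not prove this statement at all: it is quoted verbatim from \cite[Lemma 4]{MSch2} (see the sentence introducing Propositions \ref{prop:Creduc} and \ref{prop:MSch}), so there is no internal proof to compare against. Your route via the spherical intersection property of Lemma \ref{l:gauge}\eqref{p:SIP} is the natural one, and the first half is sound: interchanging the two intersections and collapsing, for fixed $j$, the family $x+D\,H^{\le}_{\pm a^j,1}$ over $x\in\bd(K)$ to the single slab $S_j=\{y:\ h(K,a^j)-D\le (a^j)^Ty\le D-h(K,-a^j)\}$ is correct, and it does yield $K=\bigcap_{j}S_j$, hence that $K$ is a polytope all of whose facet normals lie in $\{\pm a^j\}$.

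The gap is in the second assertion, and it leaks back into the first. Your argument shows only the implication \enquote{$S_j$ contributes a facet of $K$ $\Rightarrow$ its bounding hyperplanes support $K$ $\Rightarrow$ $b_{a^j}(K,C)=D(K,C)$}, and you then \emph{define} $[m]$ to be this set of directions. Read as in \cite{MSch2} (where $[m]$ should be $[l]$), the claim is that the diameter is attained in \emph{every} direction $a^j$, $j\in[l]$; your proof does not exclude a facet normal $a^{j_1}$ of $C$ with $b_{a^{j_1}}(K,C)<D(K,C)$. Ruling this out needs an extra step: if $b_{a^{j_1}}<D$ then both bounding hyperplanes of $S_{j_1}$ miss $K$, so $S_{j_1}$ is redundant and $K=\bigcap_{j\ne j_1}S_j$; one must then derive from the self-consistency of the remaining slabs (each $S_j$ is the minimal slab $T_j$ fattened concentrically by $D-b_{a^j}$ on each side) that this forces $b_{a^{j_1}}\ge D$ after all --- e.g.\ for the cross-polytope in $\R^3$, dropping one of the four slabs leaves a parallelepiped whose breadth in the dropped direction computes to $3D$, a contradiction. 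Without this, two further statements in your write-up are unsupported: (i) the representation $K=\bigcap_j(d^j+H^{\le}_{\pm a^j,1})$ uses slabs of thickness exactly $D$, whereas your $S_j$ has thickness $2D-b_{a^j}>D$ in any non-diameter direction (this is repairable by sandwiching a concentric thickness-$D$ slab between $T_j$ and $S_j$, but you should say so); and (ii) the converse claim that every diameter direction contributes a facet of $K$ is not proved and is not needed --- a direction with $b_{a^j}=D$ may support $K$ only in a lower-dimensional face. Note that for the way Proposition \ref{prop:MSch} is actually used in Remark \ref{rem:ComplRedPolyt} (facets of $K-K$ parallel to facets of $K$), your weaker conclusion would suffice.
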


\begin{rem}\label{rem:ComplRedPolyt}
  It follows directly from the definition of the width that $w(K,C)C\subset^{\rm opt} K-K$ for
  all 0-symmetric $C$. Now, by Proposition \ref{prop:Creduc},
  \emph{all} vertices  $\pm w(K,C)q^i$ of $w(K,C)C$ belong to $\bd(K-K)$
  if $C$ is a 0-symmetric polytope and $K$ is reduced with respect to $C$ (if not, then one of the
  segments $c^i+(w(K,C)/2)[-q^i,q^i]$ could not attain the width of $K$
  in any direction). In this sense, Proposition \ref{prop:Creduc} strengthens
  the general containment $w(K,C)C\subset^{\rm opt} K-K$, which only assures a certain distribution of the touching vertices.
  %touching condition of the optimal containment $w(K,C)C \subset^{\opt} K-K$. (Remember that the latter always holds true, but
  %can be achieved with only two opposing vertices of $w(K,C)C$ touching the boundary
  %of $K-K$).

  Analogously, we have $K-K\subset^{\rm opt}D(K,C)C$ if $C$ is $0$-symmetric in general, and
  % (\cf Corollary \ref{cor:AsymCont}).
  if $C$ is a 0-symmetric polytope and $K$ is complete with respect to $C$, then it follows from Proposition \ref{prop:MSch}
  that $K$ is a polytope and \emph{all} facets of $K-K$ which are parallel to facets of $K$
  are contained in facets of $D(K,C)C$. Thus, we obtain again a strengthening of the optimal containment
  $K-K\subset^{\opt} D(K,C)C$ in the general case (cf. Propositions \ref{prop:opt_hom} and \ref{th:pseudocomplete}), which only assures, a certain distribution of the touching points between $K-K$ and $D(K,C)C$.
  %where at most $n+1$ overlapping facets can be assured from combining Propositions

  Generalizing this to gauge bodies $C$ which are possibly not $0$-symmetric, we obtain from combining Lemma \ref{l:gauge} and the two
  Propositions \ref{prop:Creduc} and \ref{prop:MSch}
  that we may just replace the vertices/facets of $C$ by those of $C-C$ in the representation of a reduced/complete $K$, respectively.
  However, while the vertices of $C-C$ are all obtained from simply taking differences of vertices of $C$, the facets of $C-C$ may come from
  any pairs of subdimensional faces of $C$ lying in antipodal supporting hyperplanes and having at least $n-1$ as sum of their dimensions.
\end{rem}

\begin{lem}
If $C$ is a 0-symmetric polytope and $K$ is complete and reduced with respect to $C$,
such that there exists a vertex of $w(K,C)C$ belonging to a facet of $K-K$
parallel to a facet of $K$, then $K$ is of constant width with respect to $C$.
\end{lem}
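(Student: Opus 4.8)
The plan is to exploit the two structural descriptions of reduced and complete polytopes to locate, with respect to the containment chain $w(K,C)C \subseteq K-K \subseteq D(K,C)C$, the single contact point provided by the hypothesis, and then to show that this one incidence already forces the inner and outer scaled copies of $C$ to have the same dilatation factor. First I would note that, since $K$ is reduced with respect to the $0$-symmetric polytope $C$, Proposition \ref{prop:Creduc} guarantees that $K$ is itself a polytope; this is what makes the phrase \enquote{facet of $K$} meaningful and what lets me apply the complete-polytope machinery. Fixing the irredundant representation $C = \bigcap_{j} H^{\le}_{\pm a^j,1}$, so that $h(C,a^j)=1$ for every $j$, I would then invoke Proposition \ref{prop:MSch} (equivalently, the complete case of Remark \ref{rem:ComplRedPolyt}): because $K$ is a complete polytope, the facet normals of $K$ lie among the $\pm a^j$, and every facet of $K-K$ that is parallel to a facet of $K$ is contained in a facet of $D(K,C)C$.

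Now I would feed in the hypothesis. Let $v$ be the vertex of $w(K,C)C$ lying on a facet $F$ of $K-K$ parallel to a facet of $K$, and write $v = w(K,C)\,q$ with $q$ a vertex of $C$, so that in particular $q\in C$. Let $a$ be the outer normal of $F$; since $F$ is parallel to a facet of $K$, the previous step identifies $a$ (up to sign) with a facet normal of $C$, whence $h(C,a)=1$ using $C=-C$. By the completeness step, $F$ lies in the facet of $D(K,C)C$ with outer normal $a$, on which $a^Tx = D(K,C)\,h(C,a)=D(K,C)$. As $v\in F$, this gives
\[
D(K,C) = a^Tv = w(K,C)\,a^Tq \le w(K,C)\,h(C,a) = w(K,C),
\]
where $a^Tq \le h(C,a)$ simply because $q\in C$. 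Combined with the trivial inequality $w(K,C)\le D(K,C)$, this forces $w(K,C)=D(K,C)$, i.e.\ $K$ is of constant width with respect to $C$.

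The arithmetic is short, so the step I would treat with the most care is the bookkeeping that certifies $h(C,a)=1$ for the $C$-normal $a$ attached to $F$. This is exactly where both hypotheses genuinely enter: reducedness (via Proposition \ref{prop:Creduc}) is needed only to know that $K$ is a polytope at all, while completeness (via Proposition \ref{prop:MSch} and Remark \ref{rem:ComplRedPolyt}) is needed both to push $F$ into a facet of $D(K,C)C$ and to identify the facet normals of $K$ with facet normals of $C$. Were one to describe $C$ by a redundant system in which some $a^j$ failed to support $C$, one would only have $h(C,a)<1$ and the decisive inequality would collapse; committing to the facet-defining (irredundant) representation from the start is therefore the safeguard I would emphasize.
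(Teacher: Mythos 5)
Your argument is correct and follows essentially the same route as the paper: completeness (via Proposition \ref{prop:MSch} / Remark \ref{rem:ComplRedPolyt}) forces the facet of $K-K$ parallel to a facet of $K$ into a facet of $D(K,C)C$, so the vertex of $w(K,C)C$ lies on the boundary of $D(K,C)C$ and the two concentric homothets must coincide. Your support-function computation merely spells out the paper's one-line conclusion that $w(K,C)C\subset^{\opt}D(K,C)C$ implies $w(K,C)=D(K,C)$ (and note the normalization $h(C,a)=1$ you worry about is not actually needed, since $h(C,a)$ cancels from both sides of the decisive inequality).
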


\begin{proof}
This follows directly from %combining Corollary \ref{cor:AsymCont} and
Remark \ref{rem:ComplRedPolyt} as the completeness implies that
facets of $K-K$ parallel to those of $K$ have to be contained in facets of $D(K,C)C$.
Hence $w(K,C)C\subset^{\opt}D(K,C)C$, and thus $w(K,C)=D(K,C)$.
\end{proof}

\begin{lem}
Let $C$ be a 0-symmetric polytope and $K$ be complete and reduced with respect to $C$, with all the notation used in
Proposition \ref{prop:Creduc} and Proposition \ref{prop:MSch}.
If there exist $i\in[m]$ such that $c^i+(w(K,C)/2)q^i$
is a vertex of $K$ and $c^i-(w(K,C)/2)q^i$ belongs to the relative interior
of a facet $F^j=(d^j+H_{a^j,1})\cap K$ of $K$, $j\in[l]$, then $K$ is of constant width.
\end{lem}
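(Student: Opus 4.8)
The plan is to reduce the statement to the preceding lemma, whose hypothesis is that some vertex of $w(K,C)C$ lies on a facet of $K-K$ parallel to a facet of $K$. Writing $w:=w(K,C)$ and abbreviating the two given endpoints of the segment $c^i+(w/2)[-q^i,q^i]$ by $v:=c^i+(w/2)q^i$ (the vertex of $K$) and $u:=c^i-(w/2)q^i$ (the point in the relative interior of $F^j$), so that $v-u=wq^i$, I would take the vertex $wq^i$ of $wC$ as the candidate and show that it lies on the facet of $K-K$ with outer normal $-a^j$, which is parallel to the facet $F^j$ of $K$.

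First I would extract from Proposition \ref{prop:Creduc} that the segment $c^i+(w/2)[-q^i,q^i]$ attains the width, i.e., that there is a direction $s\neq 0$ with $b_s([u,v],C)=b_s(K,C)=w>0$. Since $[u,v]\subset K$ and both breadths share the denominator $h(C,s)$, equality of the breadths forces $h([u,v],s)=h(K,s)$ and $h([u,v],-s)=h(K,-s)$. Because $w>0$ gives $s^Tu\neq s^Tv$, exactly one of $u,v$ maximizes $s$ over $K$ while the other maximizes $-s$; in other words, $u$ and $v$ lie on the two faces of $K$ with outer normals $s$ and $-s$, one each.

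The crux is the next step, where the hypothesis that $u$ lies in the relative interior of $F^j$ enters: the only supporting hyperplane of $K$ at $u$ is $d^j+H_{a^j,1}$, so the outer normals of $K$ at $u$ are exactly the nonnegative multiples of $a^j$. Since $u$ lies on the face of $K$ with normal $s$ or with normal $-s$, this pins $s$ to be a nonzero multiple of $a^j$, and in either subcase it follows that $v$ lies on the face of $K$ with outer normal $-a^j$. Combining $a^{jT}u=h(K,a^j)$ (as $u\in F^j$) with $a^{jT}v=-h(K,-a^j)$ then yields $(-a^j)^T(v-u)=h(K,a^j)+h(K,-a^j)=h(K-K,-a^j)$, so $wq^i=v-u$ indeed lies on the face of $K-K$ with outer normal $-a^j$.

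Finally I would verify that this face is genuinely a facet parallel to one of $K$: it equals $(\text{face of }K\text{ with normal }-a^j)-F^j$, hence contains a translate of $-F^j$ and has dimension at least $n-1$, while being a proper face it has dimension at most $n-1$, so exactly $n-1$. Thus the vertex $wq^i$ of $wC$ lies on a facet of $K-K$ parallel to the facet $F^j$ of $K$, and the preceding lemma gives that $K$ is of constant width. I expect the main obstacle to be the middle step, namely correctly using the relative-interior hypothesis to force the width direction $s$ to be parallel to $a^j$ and thereby locating $v$ on the opposite supporting hyperplane; note that completeness is not used to produce this touching (only reducedness, via Proposition \ref{prop:Creduc}, and the relative-interior assumption), but it is used via the preceding lemma to upgrade the touching to the optimal containment $wC\subset^{\opt}D(K,C)C$ and hence $w=D(K,C)$.
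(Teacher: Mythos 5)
Your proof is correct, but it finishes differently from the paper's. The first half coincides: both arguments use Proposition \ref{prop:Creduc} to get a direction $s$ with $b_s(c^i+(w(K,C)/2)[-q^i,q^i],C)=b_s(K,C)=w(K,C)$, and then use the relative-interior hypothesis (unique supporting hyperplane at $c^i-(w(K,C)/2)q^i$) to pin $s$ to $\pm a^j$, so that $w(K,C)=b_{a^j}(K,C)$. From there the paper argues directly: by Lemma \ref{l:gauge} \eqref{p:CompExt} the point $c^i-(w(K,C)/2)q^i$ is the endpoint of a diametrical segment, and the same uniqueness of the supporting hyperplane forces $D(K,C)=b_{a^j}(K,C)$, hence $w(K,C)=D(K,C)$ in one stroke. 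You instead convert the width statement into the touching condition of the preceding lemma, showing that $w(K,C)q^i=v-u$ lies on the facet of $K-K$ with outer normal $-a^j$ (your computation $(-a^j)^T(v-u)=h(K-K,-a^j)$ and the dimension count are both fine), and then let completeness enter only through that lemma, i.e.\ through Proposition \ref{prop:MSch}. Both routes are valid and of comparable length; the paper's is more self-contained and symmetric (width and diameter are pinned to $a^j$ by the same uniqueness argument), while yours makes explicit that reducedness alone produces the vertex-on-facet touching and cleanly separates the roles of the two hypotheses, at the cost of relying on the preceding lemma. A cosmetic remark: the common denominator of the breadths is $h(C-C,s)$ rather than $h(C,s)$, which is harmless here since $C=-C$; and, like the paper, you never actually need the assumption that $c^i+(w(K,C)/2)q^i$ is a vertex of $K$.
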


\begin{proof}
Proposition \ref{prop:Creduc} implies that the segment $c^i+(w(K,C)/2)[-q^i,q^i]$
attains the width $w(K,C)$ in some direction. Since $c^i-(w(K,C)/2)q^i$ belongs to the relative interior
of $F_j$, then $w(K,C)=w(c^i+(w(K,C)/2)[-q^i,q^i],C)=b_{a^j}(K,C)$.
On the other hand, since $K$ is complete, every point in $\bd(K)$
is an endpoint of a diametrical segment (\cf~ Lemma \ref{l:gauge} (vi)), thus also $c^i-(w(K,C)/2)q^i$.
However, since the only hyperplane supporting $K$ at $c^i-(w(K,C)/2)q^i$ is $d^j+H_{a^j,1}$, we obtain $D(K,C)=b_{a^j}(K,C)$,
and therefore $w(K,C)=D(K,C)$.
\end{proof}

Now we are able to state our second theorem confirming Open Question \ref{op:compltred}
in a great variety of situations, for instance, for any smooth or for any strictly convex body.

\begin{thm}\label{th:extrsmooth}
If $K$ is complete and reduced with respect to $C$ and there exists $x\in \ext(K)$ as a smooth boundary point of $K$, then $K$ is of
constant width.
\end{thm}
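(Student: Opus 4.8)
The plan is to normalize to the symmetric case, read off from reducedness a direction in which the width of $K$ is attained at $x$, read off from completeness a direction in which the diameter of $K$ is attained at $x$, and then invoke smoothness to force these two directions to coincide. First, by Lemma \ref{l:gauge} \eqref{p:KcwC-C}, \eqref{p:KcomC-C}, and \eqref{p:KredC-C} we may assume $C=-C$; then $h(C-C,\cdot)=2h(C,\cdot)$ is even, and the breadth $b_s(K,C)=2h(K-K,s)/h(C-C,s)$ satisfies $b_{-s}(K,C)=b_s(K,C)$ and $b_{\lambda s}(K,C)=b_s(K,C)$ for all $\lambda>0$, so it depends on $s$ only through its direction up to sign. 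Throughout we use that the assignment $A\mapsto b_t(A,C)$ is monotone under inclusion, since $A\subseteq B$ implies $A-A\subseteq B-B$ and hence $h(A-A,t)\le h(B-B,t)$.

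Since $x\in\ext(K)$ and $K$ is reduced, Lemma \ref{l:gauge} \eqref{p:ReduExt} gives $y_x\in K$ and $s\neq0$ with $b_s([x,y_x],C)=b_s(K,C)=w(K,C)$. Expanding support functions, the equality $b_s([x,y_x],C)=b_s(K,C)$ reads $|s^T(x-y_x)|=h(K,s)+h(K,-s)$; combined with $s^Tx,s^Ty_x\in[-h(K,-s),h(K,s)]$ (both points lying in $K$), this forces one of $x,y_x$ onto $H_{s,h(K,s)}$ and the other onto $H_{-s,h(K,-s)}$. In either case $\pm s$ is an outer normal of a supporting hyperplane of $K$ at $x$.

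Next, completeness and Lemma \ref{l:gauge} \eqref{p:CompExt} provide $z\in K$ with $D([x,z],C)=D(K,C)$. Pick a direction $t$ attaining $D([x,z],C)=\max_{u\neq0}b_u([x,z],C)$. Then $b_t([x,z],C)\le b_t(K,C)\le D(K,C)=b_t([x,z],C)$, so both inequalities are equalities; in particular $b_t([x,z],C)=b_t(K,C)$, and the same support-function computation as above places $x$ and $z$ on the antipodal supporting hyperplanes $H_{s,h(K,t)}$ and $H_{-t,h(K,-t)}$ of $K$. Hence $\pm t$ is likewise an outer normal of a supporting hyperplane of $K$ at $x$. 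Now smoothness enters: there is a unique hyperplane supporting $K$ at $x$, so its outer-normal ray is uniquely determined, and both $\pm s$ and $\pm t$ lie on it. Thus $t=\pm\mu s$ for some $\mu>0$, and since $b$ is even and invariant under positive scaling of the direction we get $b_t(K,C)=b_s(K,C)$. Therefore $w(K,C)=b_s(K,C)=b_t(K,C)=D(K,C)$, \ie\ $K$ is of constant width with respect to $C$.

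The main obstacle is the bookkeeping that converts the two breadth equalities into statements about supporting hyperplanes through $x$: one must verify that a segment realizing the full breadth of $K$ in a direction $s$ has its two endpoints lying on the antipodal supporting hyperplanes with outer normals $\pm s$. Once this is established, smoothness collapses the width-direction and the diameter-direction at $x$ into a single direction and the conclusion is immediate. Both hypotheses on $x$ are genuinely used: extremality is required to apply Lemma \ref{l:gauge} \eqref{p:ReduExt}, while smoothness is what identifies the two directions.
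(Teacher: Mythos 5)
Your proof is correct and follows essentially the same route as the paper: normalize to symmetric $C$, use Lemma \ref{l:gauge} \eqref{p:ReduExt} to obtain a width-attaining direction that is an outer normal at $x$, use Lemma \ref{l:gauge} \eqref{p:CompExt} to obtain a diametrical segment at $x$ whose breadth-maximizing direction is also an outer normal at $x$, and let smoothness identify the two. You actually supply more detail than the paper (the support-function computation showing that a segment attaining the full breadth of $K$ in direction $s$ has its endpoints on the two supporting hyperplanes with normals $\pm s$); the only blemish is the typo $H_{s,h(K,t)}$ for $H_{t,h(K,t)}$.
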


\begin{proof}
By Lemma \ref{l:gauge} \eqref{p:KcwC-C}, \eqref{p:KcomC-C} and \eqref{p:KredC-C},
we can assume that $C$ is centrally symmetric.
Since $K$ is complete and $x\in \bd(K)$, we may use Lemma \ref{l:gauge} \eqref{p:CompExt}
to obtain that there exists $y_x\in K$ such that $2R([x,y_x],C)=D(K,C)$.

On the other hand, Lemma \ref{l:gauge} \eqref{p:ReduExt} implies that
there exist two parallel supporting hyperplanes $H_{\pm a,\beta_i}$, $i=1,2$, $a \neq 0$, $\beta_i \in \R$, at distance $w(K,C)$
such that $x\in H_{a,\beta_1}$.

Now, since $[x,y_x]$ is a diametrical segment, there exists $s\neq 0$ such that $b_s(K,C)=D(K,C)$.
Applying the smoothness of $K$ at $x$, we obtain $s=\lambda a$, $\lambda > 0$, and therefore $w(K,C)=D(K,C)$.
\end{proof}

An example not covered by Theorem \ref{th:extrsmooth} is the following: Let $K \in \R^3$ be the convex hull of a two-dimensional disc and a segment orthogonal and not disjoint to it. Then all extreme points of
 $K$ are non-smooth, even though $K$ is not a polytope.

\section{Perfect gauge bodies}

In the following, we connect the Open Question \ref{op:compltred} with perfect gauge bodies (and thus
with perfect norms in case that these bodies are $0$-symmetric). The following observation is clear.

\begin{rem}
If the Open Question \ref{op:compltred} holds true, then
a gauge body is perfect if and only if completeness implies reducedness.
\end{rem}

The final lemma in \cite{Eg} gives in its negation a necessary condition for a $3$-dimensional polytopal
norm to be perfect:

\begin{prop}\label{prop:Eggleston}
  Any perfect $0$-symmetric polytopal body $C\in\CK^3$ is simple (\ie, every vertex of $C$ is contained in at most three facets).
\end{prop}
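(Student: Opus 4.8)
The plan is to prove the contrapositive: if a $0$-symmetric polytope $C\in\CK^3$ fails to be simple, then it is not perfect, \ie, there exists a body $K$ that is complete but not of constant width with respect to $C$. So assume some vertex $v$ of $C$ lies in facets $H_{a^1,1},\dots,H_{a^m,1}$ with $m\geq 4$; by symmetry $-v$ lies in the antipodal facets. Writing $C=\bigcap_{j\in[l]}H^{\le}_{\pm a^j,1}$, I first record a convenient difference-body reformulation of constant width. By Proposition \ref{prop:MSch} a polytope $K$ complete with respect to $C$ is an intersection of slabs with normals $\pm a^j$ attaining its diameter $D:=D(K,C)$ in every direction $a^j$, so that $h(K-K,a^j)=D$ and hence $K-K\subseteq DC$, touching each facet pair $\pm a^j$ of $DC$. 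Therefore $K$ is of constant width if and only if $K-K=DC$, equivalently if and only if $K-K$ has no facet whose outer normal lies outside $\{\pm a^j:j\in[l]\}$. It thus suffices to construct a complete $K$ whose difference body acquires one extra, \enquote{corner-cutting} facet.

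Next I would translate this into the language of normal fans. The facet normals of $K-K=K+(-K)$ are exactly the rays of the common refinement of the normal fans of $K$ and $-K$. Besides the rays $\pm a^j$ inherited from $K$, a new ray — hence a new facet of $K-K$ not parallel to any facet of $C$ — appears precisely where a two-dimensional (edge) cone of the normal fan of $K$ crosses an edge cone of $-K$ transversally. This is where non-simplicity enters: the normal cone $N_v$ of the vertex $v$ is a full-dimensional cone bounded by $m\geq 4$ rays, \ie, a \emph{non-simplicial} three-dimensional cone, and cutting off the corner $Dv$ of $DC$ requires a facet of $K-K$ whose normal lies in $\inte(N_v)$. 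The angular room in this non-simplicial cone is exactly what allows one to place the slabs defining $K$ so that an edge cone of $K$ and an edge cone of $-K$ cross inside $N_v$. Concretely, I would choose the translations $d^j$ in $K=\bigcap_{j}(d^j+H^{\le}_{\pm a^j,1})$ so that the facets meeting near $v$ no longer pass through a common point, while still arranging the breadth in every direction $a^j$ to equal $D$; the latter can be maintained because there are at least four such directions to balance, providing the needed degrees of freedom. Completeness of the resulting $K$ is then confirmed from Proposition \ref{prop:MSch} (slab form plus diameter attained in each $a^j$), or equivalently via the spherical intersection property of Lemma \ref{l:gauge} \eqref{p:SIP}; and since $K-K$ then possesses the extra diagonal facet, $K-K\subsetneq DC$, so $K$ is complete but not of constant width.

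The main obstacle I expect is the explicit construction together with its verification, rather than the conceptual reduction above. One must exhibit concrete offsets $d^j$ — most transparently on a standard non-simple example such as the cross-polytope $C=\conv(\{\pm e_1,\pm e_2,\pm e_3\})$, each of whose vertices lies in four facets — for which two competing requirements hold at once: the diameter $D$ is genuinely attained in all four directions $a^j$ simultaneously, so that $K$ is truly complete and not merely diametrically maximal in some directions, and the edge cones of $K$ and $-K$ actually cross, so that the corner at $Dv$ is cut rather than merely touched. Balancing these is delicate, since enlarging the corner-cutting facet tends to push some breadth below $D$; the hypothesis $m\geq 4$ is precisely what guarantees enough slack to satisfy both, and it is here that a merely simple vertex (a simplicial corner cone, $m=3$) would leave no room and force $K-K=DC$, \ie, constant width.
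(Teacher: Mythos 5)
Your reduction is sound as far as it goes: it is correct that for a polytope $K$ that is complete with respect to $C$ one has $K-K\subseteq D(K,C)\,C$ with every facet pair of $D(K,C)\,C$ touched, so that $K$ fails to be of constant width exactly when $K-K$ acquires a facet normal outside $\{\pm a^j\}$, and the normal-fan picture of where such an extra facet of $K+(-K)$ can appear is also essentially right in $\R^3$. But the proof stops precisely where the real work begins. First, the construction of the offsets $d^j$ is never carried out; you yourself concede that enlarging the corner-cutting facet ``tends to push some breadth below $D$'' and that the balancing is ``delicate,'' and the assertion that $m\ge 4$ supplies ``enough slack'' is exactly the statement that needs proof --- nothing in the argument rules out that every admissible choice of offsets forces $K-K=DC$. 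Second, the proposed verification of completeness is invalid: Proposition \ref{prop:MSch} is only a \emph{necessary} condition (every complete polytope has the slab form with the diameter attained in each direction $a^j$), so exhibiting a body in that form does not certify completeness; you would have to check the spherical intersection property of Lemma \ref{l:gauge} \eqref{p:SIP} directly, which is a nontrivial global condition and is the second place the argument could collapse.

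It is worth comparing with how the paper actually gets this statement, namely via Lemma \ref{l:ExtEggleston} together with Remark \ref{rem:polytope} (in $\R^3$, non-simplicity is equivalent to the existence of two facets meeting only in a vertex $v$). There one never constructs the complete body explicitly. Instead one builds a low-dimensional ``seed'' $Y=\conv(\{0,x^2-x^1\}\cup X)$ with $D(Y,C)=1$, where $x^1,x^2$ lie in the two offending facets near $v$, takes \emph{any} completion $Y^*$ (which exists by Lemma \ref{l:gauge} \eqref{p:existCR}), and uses the spherical intersection property to trap $Y^*$ inside $C\cap(x^2-x^1+C)$; the supporting hyperplane with normal $a$ then meets $Y^*$ in a set of dimension $n-2\ge 1$ while it meets $C$ only in the vertex $v$, so $Y^*-Y^*\neq C$. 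This completion-plus-containment trick is exactly what sidesteps the delicate offset balancing and the completeness verification that your outline leaves open; to salvage your approach you would need either an explicit certified example (say for the cross-polytope) or an argument of this trapping type.
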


The following lemma extends Eggleston's result (see \cite{Eg}) to higher-dimensional spaces.

\begin{lem}\label{l:ExtEggleston}
  Let $n \ge 3$ and $C\in\CK^n$ be a $0$-symmetric polytope. If $C$ is perfect, then every pair of non-disjoint
  facets $F_1, F_2$ of $C$ intersects in at least an edge of $C$.
\end{lem}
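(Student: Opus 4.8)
The plan is to prove the contrapositive: assuming that two facets $F_1=C\cap H_{a^1,1}$ and $F_2=C\cap H_{a^2,1}$ (with outer normals $a^1,a^2$) meet in exactly a vertex, i.e.\ $\dim(F_1\cap F_2)=0$, I would construct a convex body $K$ that is complete but \emph{not} of constant width with respect to $C$, thereby certifying that $C$ is not perfect. Throughout I may assume $C=-C$ (so that $\tfrac12(C-C)=C$) and normalize $D(K,C)=1$, recording the facet representation $C=\bigcap_{j}H^{\le}_{\pm a^j,1}$, so that $h(C,a^j)=1$ for every facet normal and $K-K\subseteq C$.

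First I would isolate the mechanism that forces constant width to fail. Let $v$ be the vertex with $F_1\cap F_2=\{v\}$, so $(a^1)^Tv=(a^2)^Tv=1$. For any complete $K$, Proposition~\ref{prop:MSch} shows that $K=\bigcap_j(d^j+H^{\le}_{\pm a^j,1})$ and that the diameter is attained in every direction $a^j$; in particular the exposed faces $F_1^K,F_2^K$ of $K$ in directions $a^1,a^2$ are nonempty and $h(K-K,a^1)=h(K-K,a^2)=1$. Now if $v\in K-K$, say $v=p-q$ with $p,q\in K$, then $(a^1)^Tp-(a^1)^Tq=1=h(K,a^1)+h(K,-a^1)$ forces both $(a^1)^Tp=h(K,a^1)$ and the corresponding equality for $q$, whence $p\in F_1^K$; the same argument with $a^2$ gives $p\in F_2^K$. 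Hence $v\notin K-K$ as soon as $F_1^K\cap F_2^K=\emptyset$, and since $v$ is a vertex of $C$ this yields $K-K\subsetneq C$, i.e.\ $w(K,C)<D(K,C)$. So it suffices to produce a \emph{complete} $K$ whose two faces $F_1^K,F_2^K$ are disjoint.

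The freedom to achieve this is exactly what $\dim(F_1\cap F_2)=0$ provides. Intersecting the two facet hyperplanes gives the $(n-2)$-flat $L:=H_{a^1,1}\cap H_{a^2,1}$, and the hypothesis says $C\cap L=F_1\cap F_2=\{v\}$: the flat $L$ touches $C$ only at the single point $v$ (dually, $a^1$ and $a^2$ are non-adjacent vertices of the facet of $C^*$ dual to $v$). Starting from the constant-width body $\tfrac12 C$, whose faces $\tfrac12 F_1,\tfrac12 F_2$ meet only at $\tfrac12 v$, I would deform it near $v$ and $-v$ by ``pulling the two caps apart'' along $L$: because the faces meet in a single point rather than along an edge, an arbitrarily small such deformation separates them while all other faces are left essentially untouched. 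Concretely, I would realise $K$ either as a completion (Lemma~\ref{l:gauge}~\eqref{p:existCR}) of a suitably shaved copy of $\tfrac12 C$, or as a finite intersection $\bigcap_i(z_i+C)$ of translated gauge balls with centres $z_i$ tuned so that the supporting faces in directions $a^1,a^2$ no longer share a point.

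The main obstacle is verifying that the deformed body is \emph{genuinely} complete, not merely of the correct facial form dictated by Proposition~\ref{prop:MSch}. I would control this through the spherical intersection property of Lemma~\ref{l:gauge}~\eqref{p:SIP}, checking that $K=\bigcap_{x\in\bd(K)}(x+C)$ after the deformation; the delicate point is to guarantee both that the diameter does not drop below $1$ in the perturbed directions and that passing to a completion does not ``re-merge'' the two caps and restore $v\in K-K$. This is precisely where $\dim(F_1\cap F_2)=0$ is indispensable: a local analysis near $v$ (and symmetrically near $-v$), using that $L$ meets $C$ only at $v$, shows that the separation of $F_1^K$ and $F_2^K$ is stable under completion, whereas if $F_1\cap F_2$ contained an edge then every completion would be forced to keep a translate of that edge inside $F_1^K\cap F_2^K$, returning $v$ to $K-K$. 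Assembling these pieces yields a complete body that is not of constant width, so $C$ is not perfect, which is the contrapositive of the claim.
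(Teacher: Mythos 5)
Your contrapositive set-up and your ``mechanism'' are sound: for a complete $K$ with respect to the $0$-symmetric polytope $C$ (normalized so that $D(K,C)=1$), Proposition~\ref{prop:MSch} gives that the diameter is attained in every facet direction $a^j$, and writing $v=p-q$ indeed forces $p$ to lie simultaneously in the faces of $K$ exposed by $a^1$ and by $a^2$; so a complete $K$ whose exposed faces in directions $a^1,a^2$ are disjoint cannot satisfy $K-K=C$. The genuine gap is that you never produce such a $K$. Everything from ``I would deform it near $v$\dots'' onward is a description of an intended construction rather than a construction, and you yourself isolate the two points on which it hinges --- that the deformed body is genuinely complete, and that passing to a completion does not re-merge the two caps --- without resolving either. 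The sentence ``a local analysis near $v$ \dots shows that the separation \dots is stable under completion'' is precisely the claim that needs proof; completions of a given body are notoriously hard to control, and the entire content of the lemma is the existence of a complete body witnessing non-perfectness. As written, the argument assumes its critical step.

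For comparison, the paper closes exactly this gap by a construction that makes the completion controllable. It takes a hyperplane $H_{a,1}$ supporting $C$ only at $v$ and containing $\aff(F_1)\cap\aff(F_2)$, picks $x^i\in\relint(F_i\cap H_{a,1-\eps})$ for small $\eps>0$, sets $X:=C\cap(x^2-x^1+C)\cap H_{a,1-\eps}$ (an $(n-2)$-dimensional polytope) and $Y:=\conv(\{0,x^2-x^1\}\cup X)$, and checks $D(Y,C)=1$ on extreme points. The decisive tool you are missing is the spherical intersection property (Lemma~\ref{l:gauge}~\eqref{p:SIP}): any completion satisfies $Y^*=\bigcap_{x\in Y^*}(x+C)\subset C\cap(x^2-x^1+C)$, so $H_{a,1-\eps}$ still supports $Y^*$ in the $(n-2)$-dimensional set $X$. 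Hence $Y^*-Y^*$ has a face of dimension at least $n-2\ge 1$ in direction $a$, while $C$ is supported in direction $a$ only at the vertex $v$, giving $Y^*-Y^*\ne C$. Note this also replaces your non-constant-width certificate (excluding the vertex $v$ from $K-K$) by a dimension count on a single exposed face; if you want to salvage your outline, the containment of every completion in $C\cap(x^2-x^1+C)$ via the SIP is the missing ingredient.
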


\begin{proof}
Let us assume that $F_1, F_2$ are two facets of $C$ only intersecting in a vertex $v$ of $C$.
The intersection $\aff(F_1)\cap\aff(F_2)$ is an affine $(n-2)$-subspace.
Let $a\in\R^n$, and $H_{a,1}$ be a hyperplane supporting $C$ solely in $v$ and containing
$\aff(F_1)\cap\aff(F_2)$, such that $C\subset H_{a,1}^\le$. Let $\eps>0$ be small enough such that $H_{a,1-\eps}$
intersects $F_1$ and $F_2$ in $(n-2)$-dimensional polytopes and $x^i \in \relint(F_i \cap H_{a,1-\eps})$, $i=1,2$.
Then $H_{a,1-\eps}$ is a supporting hyperplane of the intersection $C\cap(x^2-x^1+C)$ and $X:= C \cap(x^2-x^1+C) \cap H_{a,1\eps}$
an $(n-2)$-dimensional polytope.

We now define the set $Y:=\conv(\{0,x^2-x^1\}\cup X)$.
If $\eps$ tends to 0, this set $[0,x^2-x^2]$ and $X$ converge to $0$ and $v$, respectively.
On the other hand, for any $x\in X\subset F_1\subset\bd(C)$,
we have that $[-x,x]\subset^{\rm opt}C$, hence $D([0,x],C)=1$. Analogously,
$x^1-x^2+x\in x^1-x^2+X\subset F_2\subset \bd(C)$, thus $[x^1-x^2+x,-(x^1-x^2+x)]\subset^{\rm opt}C$,
and hence $D([x^2-x^1,x],C)=D([x^1-x^2+x,0],C)=1$.
Since the diameter $D(Y,C)$ is always attained by a pair of extreme
points, we conclude that $D(Y,C)=1$.

Now let $Y^*$ be a completion of $Y$ with respect to $C$. Using the spherical intersection property (Lemma \ref{l:gauge} \eqref{p:SIP}),
we obtain
\[
Y^*=\bigcap_{x\in Y^*}(x+C)\subset C\cap(x^2-x^1+C),
\]
and since $H_{a,1-\eps}$ supports $C\cap(x^2-x^1+C)$ in $X$, it also supports $Y^*$ in $X$.
Hence the supporting hyperplanes of $Y^*-Y^*$ parallel to $H_{a,1-\eps}$
support it in a set of dimension at least $n-2$, while $C$ is only supported at $\pm v$. This proves $Y^*-Y^* \neq C$, and hence
$Y^*$ is complete but not of constant width with respect to $C$.
\end{proof}

\begin{rem}\label{rem:polytope}
  Lemma \ref{l:ExtEggleston} extends Proposition \ref{prop:Eggleston} to arbitrary dimensions $n\geq 3$.
  Indeed, if $C\in\CK^3$ is a $0$-symmetric polytope possessing a pair of facets $F_1, F_2$ intersecting only in a
  vertex of $C$, then each of the facets contains two different edges intersecting in the vertex. However,
  since the facets only intersect in the vertex, the four edges are different, thus
  implying that $C$ is not simple.
  Conversely, since in 3-space there is a rotational order of the facets around any vertex,
  the assumption that $C$ is non-simple implies the existence of two different facets intersecting only in a vertex.

  In dimensions at least $4$, the assumption that every pair of non-disjoint facets intersecting in at least an edge does not imply
  that $C$ is simple. Observe that the other way around still holds true: every
  two non-disjoint facets of a simple polytope intersect in an $(n-2)$-dimensional face of $P$.
  For the converse take, as an example, a double pyramid $P:=\conv((S_{n-1}\times\{0\})\cup\{\pm e^n\})$,
  where $S_{n-1}$ is an $(n-1)$-dimensional Minkowski-centered simplex and $e^n=(0,\dots,0,1)$. 
  On the one hand, $P$ is non-simple, as every vertex in $S_{n-1}\times\{0\}$ is contained in $(n-1)+2$ edges of $P$.
  On the other hand, every facet of $P$ contains exactly $n$ vertices, while $P$ has
  $n+2$ vertices in total. Thus every two different facets have
  at least $n-2$ vertices in common. Hence, if $n\geq 4$, any two
  (intersecting) facets of $P$ intersect at least in an edge.
  
\end{rem}

\begin{cor}\label{cor:S-Snonperfect}
  Let $n \ge 3$, $S,C \in \CK^n$ such that $S$ is an $n$-simplex, and $C-C=S-S$. Then $C$ is not perfect.
\end{cor}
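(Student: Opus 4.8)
The plan is to reduce the statement to a property of the difference body $S-S$ and then invoke the contrapositive of Lemma~\ref{l:ExtEggleston}. Since $C-C=S-S$, Lemma~\ref{l:gauge}~\eqref{p:PerfectC-C} tells us that $C$ is perfect if and only if $C-C=S-S$ is perfect; hence it suffices to show that the difference body of an $n$-simplex is \emph{not} perfect for $n\ge 3$. As $S-S$ is a $0$-symmetric polytope, Lemma~\ref{l:ExtEggleston} applies, so it is enough to exhibit two non-disjoint facets of $S-S$ whose intersection is a single vertex (and thus not at least an edge).

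The core step is therefore to understand the facets of $S-S$. Writing $v^0,\dots,v^n$ for the vertices of $S$, I would use that the face of $S-S=S+(-S)$ maximizing a linear functional $u$ is the Minkowski sum of the corresponding faces of $S$ and of $-S$, that is, $\conv(\{v^i:i\in I\})-\conv(\{v^j:j\in J\})$, where $I$ is the set of vertices of $S$ maximizing $u$ and $J$ the set minimizing $u$. For a suitable $u$ one can realize any ordered partition $\{0,\dots,n\}=I\sqcup J$ into two non-empty blocks, since the values of $u$ on the affinely independent points $v^k$ may be prescribed freely; the affine independence of the $v^k$ then shows that the resulting face has dimension $(|I|-1)+(|J|-1)=n-1$, i.e.\ is a facet. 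Consequently the vertices of $S-S$ are exactly the points $v^i-v^j$ with $i\ne j$, and $v^i-v^j$ lies on the facet indexed by $(I,J)$ precisely when $i\in I$ and $j\in J$.

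With this description the pair of facets is easy to name: take $F_1=v^0-\conv(\{v^1,\dots,v^n\})$, indexed by $I_1=\{0\}$ and $J_1=\{1,\dots,n\}$, and $F_2=\conv(\{v^0,v^2,\dots,v^n\})-v^1$, indexed by $I_2=\{0,2,\dots,n\}$ and $J_2=\{1\}$. Since two faces of a polytope intersect in the convex hull of their common vertices, the common vertices of $F_1$ and $F_2$ are $\{v^i-v^j:i\in I_1\cap I_2,\ j\in J_1\cap J_2\}=\{v^0-v^1\}$; thus $F_1\cap F_2=\{v^0-v^1\}$ is a single vertex, while $F_1$ and $F_2$ are non-disjoint. (For $n=3$ these are two triangular faces of the cuboctahedron $S-S$ meeting in exactly one vertex, and the argument is uniform in $n\ge 3$.) By the contrapositive of Lemma~\ref{l:ExtEggleston}, $S-S$ is not perfect, and therefore neither is $C$.

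I expect the main obstacle to be the facet bookkeeping of $S-S$: one must verify both that the chosen ordered partitions really define $(n-1)$-dimensional faces (resting on the affine independence of the simplex vertices) and that the intersection of the two facets collapses to a single vertex rather than containing an edge (using the clean fact that faces of a polytope meet in the face carried by their common vertices). Everything else — the reduction via Lemma~\ref{l:gauge} and the final appeal to Lemma~\ref{l:ExtEggleston} — is routine.
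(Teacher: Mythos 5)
Your proposal is correct, and it follows the same skeleton as the paper's proof: reduce to $C=S-S$ via Lemma~\ref{l:gauge}~\eqref{p:PerfectC-C}, exhibit two facets of $S-S$ meeting in exactly one vertex, and conclude with Lemma~\ref{l:ExtEggleston}. You even pick (up to relabelling) the same pair of facets, namely a translate of $-F$ for a facet $F$ of $S$ and a translate of a facet of $-S$ reflected back, sharing the vertex $v^0-v^1$. The only genuine difference is how the single-vertex intersection is verified. The paper first normalizes $S$ to be Minkowski-centered and then invokes Corollary~\ref{cor:AsymCont}, computing the intersections of the two facets with the relative boundary of $(n+1)(S\cap(-S))$ to isolate the common point $p^2-p^1$. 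You instead use the standard face decomposition of a Minkowski sum, $F(S-S,u)=F(S,u)-F(S,-u)$, so that facets of $S-S$ correspond to ordered two-block partitions $(I,J)$ of the vertex set, a vertex $v^i-v^j$ lies on the facet $(I,J)$ iff $i\in I$ and $j\in J$, and the intersection of two facets is the convex hull of their common vertices; for your $(I_1,J_1)=(\{0\},\{1,\dots,n\})$ and $(I_2,J_2)=(\{0,2,\dots,n\},\{1\})$ this collapses to $\{v^0-v^1\}$. Your route is more elementary and self-contained (it needs only the affine independence of the simplex vertices and no Minkowski-centering), whereas the paper's route reuses machinery (Corollary~\ref{cor:AsymCont}) already developed for the completeness/reducedness analysis; both are sound, and the bookkeeping you flag as the main obstacle is exactly the point where the two proofs diverge.
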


\begin{proof}
  Because of Lemma \ref{l:gauge} \eqref{p:PerfectC-C}, it is enough to verify the corollary in case of $C=S-S$, and we may assume that, without loss of generality, $S$
  is Minkowski-centered.
  Let $S=\conv(\{p^1,\dots,p^{n+1}\})$, and $F_i=\conv(\{p^j:j\in[n+1]\setminus\{i\}\})$ be the facet of $S$ not containing $p^i$.
  Let us consider the two facets $-p^1+F_1$ and $p^2-F_2$ of $S-S$, which have the same outer normals as $F_1$ and $-F_2$, respectively,
  and $p^2-p^1$ as a common vertex.
  % Let us denote this facets
  % by $-p^1+F_1:=-p^1+\conv(\{p^2,\dots,p^{n+1}\})$ and $p^2-F_2:=p^2-\{p^1,p^3,\dots,p^{n+1}\}$.
  From Corollary \ref{cor:AsymCont} we obtain
\[
-p^1+F_1\subset(n+1)(F_1\cap(-S))\quad\text{and}\quad p^2-F_2\subset(n+1)(S\cap(-F_2)).
\]
Indeed, since $S\cap\bd(-nS)=\{p^1,\dots,p^{n+1}\}$, we have
  $F_1\cap(\bd(-nS)))=\{p^2,\dots,p^{n+1}\}$. Thus
  \[(-p^1+F_1)\cap(\relbd((n+1)(F_1\cap(-S))))=-p^1+\{p^2,\dots,p^{n+1}\}\]
  and, analogously,
  \[(p^2-F_2)\cap(\relbd((n+1)(S\cap(-F_2))))=p^2-\{p^1,p^3,\dots,p^{n+1}\}.\]
  Hence $(-p^1+F_1)\cap(p^2-F_2)=\{p^2-p^1\}$, which finishes the proof because of Lemma \ref{l:ExtEggleston}.
\end{proof}

\begin{cor}
Let $C\in\CK^n$ be 0-symmetric, $n\ge 3$, and $S$ be an $n$-simplex such that
\[S-S\subseteq D(S,C)C\subset (n+1)(S\cap(-S)).\]
Then $C$ is not perfect.
\end{cor}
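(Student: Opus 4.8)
The plan is to read off from the displayed containment that $S$ is complete with respect to $C$, and then to conclude that $C$ cannot be perfect by a short case distinction, the only nontrivial case being absorbed by Corollary \ref{cor:S-Snonperfect}.

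First I would note that, since $C=-C$ gives $C-C=2C$, the displayed chain
\[
S-S\subseteq D(S,C)C\subset(n+1)(S\cap(-S))
\]
is exactly condition (ii) of Proposition \ref{prop:Tcomplete} for the Minkowski-centered simplex $S$. Using the equivalence (ii)$\Leftrightarrow$(iii) there, I conclude that $S$ is complete with respect to $C$.

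The decisive step is then a dichotomy. If $S$ is \emph{not} of constant width with respect to $C$, then $S$ is itself a complete body failing to be of constant width, so $C$ is not perfect by the very definition of perfectness, and we are done. It remains to handle the case in which $S$ happens to be of constant width with respect to $C$; this is precisely where Corollary \ref{cor:S-Snonperfect} is invoked. Constant width means $S-S=\rho(C-C)=2\rho C$ for some $\rho>0$, whence $C=\tfrac{1}{2\rho}(S-S)$ and therefore
\[
C-C=2C=\tfrac{1}{\rho}(S-S)=S'-S',\qquad S':=\tfrac{1}{\rho}S,
\]
with $S'$ again an $n$-simplex. Since $n\geq 3$, Corollary \ref{cor:S-Snonperfect} then gives that $C$ is not perfect. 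In either case $C$ is not perfect, as claimed.

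The step I expect to require the most care is making Proposition \ref{prop:Tcomplete} applicable, that is, ensuring that the origin really is the Minkowski center of $S$. Note that the first inclusion $S-S\subseteq D(S,C)C$ is automatic, since it merely restates $D(S,C)=\max_{s\neq 0}h(S-S,s)/h(C,s)$; hence all the geometric content is carried by the translation-sensitive inclusion $D(S,C)C\subset(n+1)(S\cap(-S))$, and the hard part will be to verify that this extremal inclusion forces the origin to sit at the Minkowski center of $S$ (and not merely in its interior), which is what legitimizes reading the chain as condition (ii) of Proposition \ref{prop:Tcomplete}. By contrast, the conceptually essential observation is that the constant-width subcase does not contradict the statement but collapses exactly onto the hypothesis $C-C=S'-S'$ of Corollary \ref{cor:S-Snonperfect}, so that non-perfectness is recovered there too.
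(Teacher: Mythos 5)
Your proof is correct and follows essentially the same route as the paper's: Proposition \ref{prop:Tcomplete} yields completeness of $S$, and the dichotomy on whether $S$ is of constant width (equivalently, given that $S-S\subseteq D(S,C)C$ always holds, whether $D(S,C)C=S-S$) is resolved in the nontrivial case by rewriting $C-C$ as a difference body of a simplex and invoking Corollary \ref{cor:S-Snonperfect}. The Minkowski-centering concern you flag is legitimate, but the paper leaves it equally implicit, applying Proposition \ref{prop:Tcomplete} without further comment.
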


\begin{proof}
Because of Proposition \ref{prop:Tcomplete} we know that in case of $D(S,C)C\neq S-S$ we have that $S$ is complete but not of constant width with respect to $C$,
showing the non-perfectness of $C$. However, in case that $D(S,C)C = S-S$, Corollary \ref{cor:S-Snonperfect} implies that $C$ is non-perfect either.
\end{proof}
%%%%%%%%%

Finally, the following theorem gives a characterization of perfect norms in terms of the linearity of the width between $K$ and any completion $K^*$.

\begin{thm}
Let $C\in\CK^n$. The following are equivalent:
\begin{enumerate}[(i)]
\item $C$ is perfect.
\item For all $K\in\CK^n$ and any completion $K^*$ of $K$ we have that
\[ w(\lambda K+(1-\lambda)K^*,C)=\lambda w(K,C)+(1-\lambda)w(K^*,C) \text{ for all }\lambda\in[0,1].\]
\end{enumerate}
\end{thm}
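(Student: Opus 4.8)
The plan is to reduce the whole statement to a single additivity property of breadths. By Lemma \ref{l:gauge} \eqref{p:KcwC-C}, \eqref{p:KcomC-C}, \eqref{p:KredC-C}, and \eqref{p:PerfectC-C} I may assume $C=-C$: passing from $C$ to $C-C$ preserves perfectness, completeness and hence completions, and rescales $w(\cdot,C)$ and $D(\cdot,C)$ by the same factor $2$, so it changes neither side of the width identity in (ii). The key computation is that the $s$-breadth is Minkowski-additive. Since $(\lambda K+(1-\lambda)K^*)-(\lambda K+(1-\lambda)K^*)=\lambda(K-K)+(1-\lambda)(K^*-K^*)$ and support functions are additive under Minkowski sums and positively homogeneous, one obtains, for every $s\neq 0$ and $\lambda\in[0,1]$,
\[
  b_s(\lambda K+(1-\lambda)K^*,C)=\lambda\, b_s(K,C)+(1-\lambda)\, b_s(K^*,C).
\]
Taking the minimum over $s$ exhibits $\lambda\mapsto w(\lambda K+(1-\lambda)K^*,C)$ as a pointwise minimum of affine functions, hence a concave function, giving always
\[
  w(\lambda K+(1-\lambda)K^*,C)\ge \lambda\, w(K,C)+(1-\lambda)\, w(K^*,C).
\]
The decisive elementary observation I would record first is that equality holds for all $\lambda\in[0,1]$ if and only if $K$ and $K^*$ share a minimal-width direction, i.e.\ there is $s^*\neq 0$ with $b_{s^*}(K,C)=w(K,C)$ and $b_{s^*}(K^*,C)=w(K^*,C)$. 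Indeed, evaluating at $\lambda=\tfrac12$ forces the two nonnegative gaps $b_{s^*}(K,C)-w(K,C)$ and $b_{s^*}(K^*,C)-w(K^*,C)$ to vanish, and conversely such an $s^*$ makes the chord attainable.

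With this reformulation, the implication (i)$\Rightarrow$(ii) is immediate. If $C$ is perfect, then any completion $K^*$ is of constant width, so $b_s(K^*,C)\equiv D(K^*,C)=w(K^*,C)$ is constant in $s$; then $\min_s[\lambda\, b_s(K,C)+(1-\lambda)w(K^*,C)]=\lambda\, w(K,C)+(1-\lambda)\, w(K^*,C)$, which is exactly the identity in (ii).

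The substance lies in (ii)$\Rightarrow$(i), which I would establish by contraposition. Suppose $C$ is not perfect, so there is a body $W$ that is complete but not of constant width; then $w(W,C)<D(W,C)$ and the set $M_W$ of minimal-width directions of $W$ is a proper closed subset of the sphere of directions. The aim is to manufacture a single body $K$ with $W$ as a completion — that is, $K\subseteq W$ and $D(K,C)=D(W,C)$ — whose minimal-width directions all lie off $M_W$, so that the pair $(K,W)$ has no common minimal-width direction and (ii) fails. Here I would invoke Lemma \ref{l:gauge} \eqref{p:CompExt}: completeness makes every boundary point of $W$ the endpoint of a diametral segment, so $W$ carries diametral chords in an abundance of directions. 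Fixing such a chord $[p,q]$ and thickening it anisotropically into a thin lens $K\subseteq W$ — fat in those directions perpendicular to $q-p$ that belong to $M_W$, and as thin as possible in a perpendicular direction $t\notin M_W$ — yields a body whose diameter is still realized by $[p,q]$ (so $W$ remains a completion) but whose minimal width is attained only near $t$, hence outside the closed set $M_W$.

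The main obstacle is precisely this construction: I must guarantee that a perpendicular direction $t\notin M_W$ admitting an accompanying diametral chord exists, and that the thickening can be performed inside $W$ while confining the minimal-width directions of $K$ to the open complement of $M_W$. The delicate case is when every diametral chord direction $v$ satisfies $v^\perp\subseteq M_W$, i.e.\ the projection of $W$ onto $v^\perp$ has constant width $w(W,C)$. To dispose of it I would play two non-parallel diametral chords against one another, observing that $v^\perp\subseteq M_W$ for a diameter direction $v$ forces all diameter directions to be pairwise non-orthogonal (an orthogonal pair would put a breadth-$D(W,C)$ direction into $M_W$), and then combine this rigidity with the wealth of chord directions supplied by Lemma \ref{l:gauge} \eqref{p:CompExt} — and, if needed, the spherical intersection property \eqref{p:SIP} — to contradict $w(W,C)<D(W,C)$.
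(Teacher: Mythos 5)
Your reduction to symmetric $C$, the additivity of $b_s$ under Minkowski combinations, the resulting concavity inequality, the reformulation of equality as the existence of a common minimal-width direction of $K$ and $K^*$, and the proof of (i)$\Rightarrow$(ii) all match the paper's argument. The overall strategy for (ii)$\Rightarrow$(i) -- contraposition, producing $K\subseteq U$ with $U$ a completion of $K$ whose width is attained only in directions where $U$ does not attain its width -- is also the paper's strategy. But the construction of such a $K$ is the entire substance of that direction, and you have not carried it out: you say yourself that ``the main obstacle is precisely this construction,'' and what follows is a sketch of intentions rather than a proof. The ``thin lens'' idea requires $K$ to be simultaneously fat in \emph{every} direction of the (possibly large) closed set $M_U$ of minimal-width directions of $U$ and thin in some direction outside it, all while staying inside $U$ and containing a diametral chord; you give no argument that this is realizable. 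Your treatment of the ``delicate case'' is also confused: it conflates chord directions with normal directions and invokes Euclidean orthogonality, which is not the relevant structure for breadths measured by a general gauge $C$, and it ends with an unproved claim that the configuration contradicts $w(U,C)<D(U,C)$.

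For comparison, the paper avoids controlling all of $M_U$ at once. It uses that a complete set has coinciding in- and circumcenter, applies Proposition \ref{prop:opt_hom} to the containment $U\subset R(U,C)C$ to obtain touching points $p^i$ with normals $a^i$ satisfying $0\in\conv(\{a^1,\dots,a^{n+1}\})$, and notes that the chords $[p^i,-(r(U,C)/R(U,C))p^i]$ through the common center are diametral, so $b_{a^i}(U,C)=D(U,C)>w(U,C)$. It then chooses, by continuity of $a\mapsto b_a(U,C)$ and density of exposed points, a direction $a$ near $a^2$ with $b_a(U,C)>w(U,C)$ such that the slab $H^{\leq}_{\pm a,\beta}$ supports the inball at unique exposed antipodal points, and defines $K$ by slicing $U$ with a slightly narrower slab that still contains the diametral chord $[p^1,q^1]$. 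This makes $w(K,C)=b_a(K,C)=2r(K,C)$ \emph{uniquely} attained at the single direction $a$, which by construction is not a width direction of $U$; only one direction needs to be controlled. You would need to supply an argument of comparable precision to close the gap in your (ii)$\Rightarrow$(i) step.
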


\begin{proof}
Because of Lemma \ref{l:gauge}, \eqref{p:KcomC-C} we may assume that,
without loss of generality, $C$ is 0-symmetric.
First of all, observe that
\begin{equation*}
\begin{split}
w(\lambda K+(1-\lambda)K^*,C)=\min_sb_s(\lambda K+(1-\lambda)K^*,C)=
\min_s(\lambda b_s(K,C)+(1-\lambda)b_s(K^*,C))&\\
\geq\lambda\min_sb_s(K,C)+(1-\lambda)\min_sb_s(K^*,C)=\lambda w(K,C)+(1-\lambda)w(K^*,C).&
\end{split}
\end{equation*}

If (i) holds true, the set $K^*$ is
of constant width, and hence there is equality above as $b_s(K^*,C)=w(K^*,C)$ for all $s$, therefore
proving (ii).

Now let us assume that (i) is false. Then we will prove that (ii) is false as well.
Assuming that (i) is false, there exists a complete body $U$ which is not of constant width.
The idea of the proof is to construct a subset $K$ of $U$ such that $U$ is a completion of $K$
and the width $w(K,C)$ and the width $w(U,C)$ are not achieved in the same direction.
This then implies $w(K+U,C)>w(K,C)+w(U,C)$, thus leading to the desired contradiction.

For symmetric $C$ any complete set has coinciding in- and circumcenters (see, \eg, \cite{Sall}).
Hence we may assume again that, without loss of generality,
$r(U,C)C\subset U\subset R(U,C)C$.

Now we obtain from Proposition \ref{prop:opt_hom} that there exist points
$p^i \in U\cap \bd(R(U,C)C)$ and outer normals $a^i$ of hyperplanes supporting $U$
and $R(U,C)C$ at $p^i$, $i \in [n+1]$, with $0\in\conv(\{a^1,\dots,a^{n+1}\})$. Moreover, we may scale the vectors $a^i$ such that $(a^i)^Tp^i = R(U,C)$.

Now, defining $q^i:=-(r(U,C)/R(U,C))p^i \in r(U,C)C \subset U$, we see that all the segments $[p^i,q^i]$
are diametrical chords of $U$. Hence it must hold that $b_{a^i}(U,C)=D(U,C)$ for all $i \in [n+1]$
and, defining $\beta_2 := h(r(U,C)C,a^2)=(r(U,C)/R(U,C))(a^2)^Tp^2$, also that $p^1 \in H^{\leq}_{-a^2,\beta_2}$.

Since $0=\sum\limits^{n+1}_{i=1}\lambda_ia^i$, $\lambda_i > 0 $, we may assume that $(a^2)^Tp^1< 0$.
Now consider the set $M=U\cap H^{\leq}_{\pm a^2,\beta_2}$,
which still contains $p^1$ and $q^1$.
The half-space $H^{\leq}_{-a^2,\beta_2}$ supports the inball $r(U,C)C$ and therefore contains extreme points of it.
Now, by continuity of $b_a(U,C)$, $a\in\R^n\setminus\{0\}$, for any $a$ sufficiently close to $a^2$ holds
$D(U,C)-b_a(U,C)=b_{a^2}(U,C)-b_a(U,C)<D(U,C)-w(U,C)$. However,
since the set of exposed points is dense within the set of extreme points of any convex body, we
may carefully choose $a$ such that the following are true:
\begin{itemize}
\item $w(U,C) < b_a(U,C)$,
\item $H_{\pm a^2,\beta_2}$ are hyperplanes supporting $r(U,C)C$ solely in a pair of exposed antipodal points
of the inball $r(U,C)C$ of $U$, and
\item $p^1,q^1\in H^<_{\pm a^2,\beta_2}$.
\end{itemize}
Due to these conditions, there exists $\eps>0$ small enough such that
$K:=U\cap H^{\leq}_{\pm a^2,\beta_2-\eps\norm[a]_2}$
still satisfies $p^1,q^1\in K$. Now, because $D(K,C)=D([p^1,q^1],C)=D(U,C)$, we see that
$U$ is a completion of $K$.
Moreover, since each of the half-spaces
$H^{\leq}_{\pm a^2,\beta_2-\eps\norm[a]_2}$
touches $(r(U,C)-\varepsilon)C=r(K,C)C$ in a unique exposed point and $r(K,C)C\subset\inter(U)$, $b_a(K,C)=2r(K,C)<b_s(K,C)$ for all directions $s \neq a$.
Thus the width of $K$ is uniquely attained in direction of $a$, and because
$w(U,C)<b_a(U,C)$, we can conclude
that $w(K+U,C)>w(K,C)+w(U,C)$.
\end{proof}

\bigskip
The following lemma proves the fact that all two-dimensional generalized Minkowski spaces are perfect
(\cf~\cite[p.~171]{Eg} for normed spaces).

\begin{lem}\label{l:polyimplgen}
If $K\in\CK^2$ is complete with respect to $C$, then $K-K=(D(K,C)/2)(C-C)$.
\end{lem}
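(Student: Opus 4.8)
The plan is to reduce to the centrally symmetric case and then to show that \emph{every} breadth of $K$ already equals its diameter. By Lemma~\ref{l:gauge}~\eqref{p:KcwC-C} and \eqref{p:KcomC-C} I may assume $C=-C$; write $\bar D:=D(K,C)$, so that constant width with respect to $C$ amounts to $K-K=\bar D\,C$. Since $D(K,C)=\max_{s\neq0}b_s(K,C)$, the inclusion $K-K\subseteq\bar D\,C$ (equivalently $b_s(K,C)\leq\bar D$ for all $s$) holds automatically, and it remains to prove the reverse inequality, i.e.\ that $b_s(K,C)=\bar D$ for every direction $s$. I will extract this from the binormal structure of diametral chords that completeness forces.

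First I would establish a \emph{binormal lemma}: if $[p,q]$ is any diametral chord of $K$, so that $q-p\in\bar D\,\bd(C)$, and $\sigma$ is an outer normal of $\bar D\,C$ at $q-p$, then $\sigma$ supports $K$ at $q$ and $-\sigma$ supports $K$ at $p$, whence $b_\sigma(K,C)=\bar D$. This follows from the spherical intersection property (Lemma~\ref{l:gauge}~\eqref{p:SIP}), which gives $K\subseteq(p+\bar D\,C)\cap(q+\bar D\,C)$: testing $\sigma$ against $k-p\in\bar D\,C$ shows that $q$ maximises $\sigma^Tk$ over $K$, and testing $-\sigma$ against $k-q\in\bar D\,C$ shows that $p$ minimises it, after which $b_\sigma(K,C)=\sigma^T(q-p)/h(C,\sigma)=\bar D$ is immediate. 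Thus every diametral chord is a binormal, and the set $G:=\{s:b_s(K,C)=\bar D\}$ is closed, by continuity of $s\mapsto b_s$, and contains the $C$-normal direction of each diametral chord.

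Next I would cover all directions. By completeness, Lemma~\ref{l:gauge}~\eqref{p:CompExt} guarantees that every $x\in\bd(K)$ is the endpoint of a diametral chord; combined with the binormal lemma this shows that $G$ meets the normal cone $N(x)$ of every boundary point. Choosing for each $x$ a diametral partner $\phi(x)$, I would argue that as $x$ traverses $\bd(K)$ the outer normal direction of the associated binormal runs monotonically once around the circle of directions, so that the chord-difference map $x\mapsto x-\phi(x)\in\bar D\,\bd(C)$ is surjective onto $\bar D\,\bd(C)$. This yields $\bar D\,\bd(C)\subseteq K-K$, and together with $K-K\subseteq\bar D\,C$ it forces $K-K=\bar D\,C$. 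Finally, Lemma~\ref{l:gauge}~\eqref{p:KcwC-C} transports the conclusion back to the original gauge $C$.

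The main obstacle is this last covering step in the presence of \emph{corners} of $K$ (equivalently, the polygonal case), where the partner $\phi$ can jump and the normal cone $N(x)$ of a corner is a nondegenerate arc on which a priori only one direction is known to lie in $G$, so that monotonicity of normals alone need not fill the arc. I expect to close this gap by a Reuleaux-type observation extracted again from the spherical intersection property: opposite a corner $x$ the boundary of $K$ is pinned to the sphere $x+\bar D\,\bd(C)$, so that for every direction $s$ in the normal cone of $x$ the opposite support point $q_s$ satisfies $q_s-x\in\bar D\,\bd(C)$ with $C$-normal $-s$, giving $b_s(K,C)=\bar D$ across the whole arc $N(x)$. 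This removes the potential gaps in $G$ left by the monotone-normal argument and makes the difference map genuinely onto, completing the proof.
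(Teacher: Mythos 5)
Your reduction to $C=-C$, your binormal lemma (every diametral chord $[p,q]$ gives $b_\sigma(K,C)=D(K,C)$ for every $\sigma$ in the normal cone of $D(K,C)C$ at $q-p$), and the observation that $G=\{s\neq 0: b_s(K,C)=D(K,C)\}$ is closed and meets the normal cone $N(x)$ of every boundary point $x$ are all correct. The gap is exactly where you locate it, and the fix you propose does not close it. The spherical intersection property yields only the inclusion $K\subseteq x+D(K,C)\,C$; it does not \enquote{pin} the boundary opposite a corner $x$ to the sphere $x+D(K,C)\,\bd(C)$. What one can actually extract from $K=\bigcap_{y}(y+D(K,C)C)$ is that $N(x)$ is the closed convex conical hull of the union of the cones $N_{D(K,C)C}(x-y)$ over the points $y\in K$ diametral to $x$, and each of these cones lies in $G$; but $G$ need not be convex, so only the two extreme rays of $N(x)$ are guaranteed to lie in $G$, while the open arc between them a priori is not. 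Your assertion that for every $s\in N(x)$ the opposite support point $q_s$ satisfies $q_s-x\in D(K,C)\bd(C)$ \emph{with $C$-normal $-s$} is precisely the statement $b_s(K,C)=D(K,C)$ on that arc, i.e.\ the claim to be proved. A first-order perturbation at $x$ cannot supply the missing contradiction either: any direction $v$ respecting the active ball constraints (that is, $\sigma^Tv\le 0$ for all $\sigma$ in those normal cones) automatically satisfies $s^Tv\le 0$ for all $s\in N(x)$, so $x+\delta v$ stays in $K$ to first order. Note also that \enquote{$G$ closed plus monotone normals} cannot rescue the argument in the extreme case: if $K$ is a polygon, every direction except the finitely many edge normals lies in the interior of some vertex's normal cone, so closedness of $G$ yields nothing there.

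The paper avoids the corner problem altogether. For polygonal $K$ and $C$ it uses the planar fact that every edge of $K-K$ is parallel to an edge of $K$ or of $-K$; a relative interior point of such an edge has a unique outer normal $a$, so your binormal argument applies there and forces $b_a(K,C)=D(K,C)$ for every edge normal of $K-K$, whence $K-K=D(K,C)C$. The general case is then handled by approximation: choose polygons $C_i\supseteq C$ converging to $C$ and preserving the support lines in a diametral direction, so that $D(K,C_i)=D(K,C)$; complete $K$ with respect to each $C_i$, and use Blaschke selection together with the completeness of $K$ with respect to $C$ to show that these completions converge back to $K$, which transports the polygonal identity to the limit. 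If you wish to keep your direct approach, you must supply a genuine argument at corners (essentially Eggleston's planar proof, which requires more than a first-order analysis); as written, the decisive step is asserted rather than proved.
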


\begin{proof}
Because of Lemma \ref{l:gauge}, we can assume without loss of generality that $C$ is centrally symmetric.
First we show the case in which $K$ and $C$ are polygons.
Let $x\in K$ be a point in the relative interior of the edge $E_i$ of $K$
induced by $H_{a^i,1}$. Then $b_{a^i}(K,C)=D(K,C)$
(which follows from the uniqueness of the outer normal, up to positive multiples, at that point
and the completeness of $K$) means that the edges of $K-K$ parallel
to edges of $E_i$ are contained in the boundary of $D(K,C)C$.
Since for $n=2$ all edges of $K-K$ are parallel to
edges of $P$ it follows that $K-K=D(K,C)C$.

\medskip

Now, let $K$ be an arbitrary, planar convex set,
$a$ a unit vector such that $b_a(K,C)=D(K,C)$, and
$t^j,s^j\in\R$, $j\in[2]$, such that
$H_{a,t^j}$, $H_{a,s^j}$, $j\in[2]$, are
the parallel supporting hyperplanes of $K$ and $C$ in the direction of $a$.

We then consider a sequence $C_i$ of polygons with $C\subset C_i\subset \bigcap_{j}H^\le_{a,s^j}$
and $C_i\rightarrow C$ ($i\rightarrow\infty$). Then
\[
D(K,C_i)\leq D(K,C)=b_a(K,C)=b_a(K,C_i)\leq D(K,C_i)
\]
and therefore $D(K,C_i)=D(K,C)$ for all $i$. Now we let $K_i\supset K$ be a completion of $K$
with respect to $C_i$.

We now prove by contradiction that $K_i\rightarrow K$ ($i\rightarrow\infty$).
If not, let us observe that since $K_i$ is a completion of $K$,
then $K_i\subset x+(D(K,C)/2)(C-C)$, for some $x\in K$. Hence
$\{K_i\}_{i\in\N}$ is absolutely bounded. Using the Blaschke Selection Theorem \cite[Theorem 1.8.7]{Sch},
there exists a subsequence of $K_i$
(for which, \Wlog, we may assume that it is the sequence itself) such that
$K_i\rightarrow K_0\supset K$ ($i\rightarrow\infty$).
Then on the one hand the completeness of $K$ with respect to $C$
tells us that $D(K,C)<D(K_0,C)$, and on the other hand the continuity of
the diameter implies that $\lim_{i\rightarrow\infty}D(K_i,C_i)=D(K_0,C)$.
Altogether this shows that
\[
D(K,C)<D(K_0,C)=\lim_{i\rightarrow\infty}D(K_i,C_i)=D(K,C),
\]
a contradiction. Thus $K_i\rightarrow K$ ($i\rightarrow\infty$), and hence
\[K-K=\lim_{i\rightarrow\infty}K_i-\lim_{i\rightarrow\infty}K_i=\lim_{i\rightarrow\infty}(K_i-K_i)=\lim_{i\rightarrow\infty}(D(K_i,C_i)C_i)=D(K,C)C.\]
\end{proof}

\begin{rem}
Let us observe that the argument for general $K$ and $C$ in
Lemma \ref{l:polyimplgen} uses the fact that Open Question \ref{op:compltred} is true
for polygons. Indeed, it holds true for every $0$-symmetric $C\in\CK^n$, with $n\in\N$. This therefore motivates
why examples of non-perfect norms are polytopal. Moreover, this approach might be useful as well
when considering sets that are complete and reduced simultaneously.
\end{rem}

\emph{Acknowledgements:} We would like to thank Matthias Henze for his support
in Remark \ref{rem:polytope}.

\bibliographystyle{amsplain}

\begin{thebibliography}{99}

\bibitem{AvMa} G. Averkov, H. Martini, On reduced polytopes and antipodality,
{\it Adv. Geom.}, {\bf 8} (2008), 615$-$628.

\bibitem{BeFr} A. Belloni, R.M. Freund, On the symmetry function of a convex set,
{\it Mathematical Programming}, {\bf 111} (2008), no. 1-2, 57$-$93.
%1
\bibitem{Bo} H. F. Bohnenblust, Convex regions and projections in Minkowski spaces,
{\it Annals of Mathematics}, {\bf 39} (1938), no. 2, 301$-$308.
%2
\bibitem{BF} T. Bonnesen, W. Fenchel, {\it Theorie der konvexen
K\"orper}. Springer, Berlin, 1934, 1974. English translation: {\it Theory
of Convex Bodies}. Edited by L. Boron, C. Christenson and B. Smith. BCS
Associates, Moscow, ID, 1987.
%3
\bibitem{BrGo2} R. Brandenberg, B. Gonz\'alez Merino, The asymmetry of complete
and constant width bodies in general normed spaces and the Jung constant,
submitted 2015, ArXiv:1412.8693.
%4
\bibitem{BrK} R. Brandenberg, S. K{\"o}nig, No dimension-independent
core-sets for containment under homothetics, {\it Discrete Comput. Geom.},
{\bf 49} (2013), no. 1, 3--21.
%5
\bibitem{BrK2} R. Brandenberg, S. K\"onig, Sharpening geometric inequalities
using computable symmetry measures, {\it Mathematika}, \textbf{61} (2015), 559$-$580.
%6
\bibitem{BrRo} R. Brandenberg, L. Roth, New algorithms for $k$-center and extensions, {\it J. Comb. Optim.}, {\bf 18} (2009), 376$-$392.
%7
\bibitem{Cha-Gro} G. D. Chakerian, H. Groemer, Convex bodies of constant width, in: \emph{Convexity and its Applications} (eds. P. M. Gruber and
J. M. Wills), Birkh\"auser, Basel, 1983, pp. 49-96.
%8
%\bibitem{Eg2} H. G. Eggleston, Measures of asymmetry of convex curves of
%constant width and restricted radii of curvature, {\it Quart. J. Math.
%Oxford Ser.} {\bf 3} (1952), no. 2, 63-72.
%8.2
\bibitem{Eg} H. G. Eggleston, Sets of constant width in finite dimensional Banach spaces,
\emph{Israel J. Math.}, \textbf{3} (1965), 163$-$172.
%9
\bibitem{GoLiMePa} Y. Gordon, A. E. Litvak, M. Meyer, A. Pajor, John's decomposition in the general case and applications,
{\it J. Differential Geom.}, {\bf 68} (2004), 99$-$119.

\bibitem{GK} P. Gritzmann, V. Klee, Inner and outer $j$-radii of convex
bodies in finite-dimensional normed spaces, {\it Discrete Comput. Geom.},
{\bf 7} (1992), 255$-$280.
%10
\bibitem{Gro} H. Groemer, On complete convex bodies, \emph{Geom. Dedicata}, \textbf{20} (1986), 319$-$334.
%11
\bibitem{Gr} B. Gr\"unbaum, Measure of symmetry for convex sets, {\it Convexity, Proceedings of Symposia in Pure Mathematics},
{\bf 7}, 233$-$270. American Math. Society, Providence (1963).

\bibitem{GuKa} Q. Guo, S. Kaijser, Approximations of convex bodies by convex bodies,
{\it Northeastern Mathematical Journal}, {\bf 19} (2003), no. 4, 323$-$332.

\bibitem{Hei-Mar} E. Heil, H. Martini, Special convex bodies, in: \emph{Handbook of Convex Geometry} (eds. P. M. Gruber and J. M. Wills), North-Holland,
Amsterdam, 1993, Vol. A, pp. 347$-$385.
%12
\bibitem{LM} M. Lassak, H. Martini, Reduced bodies in Minkowski space, \emph{Acta Math. Hungar.},
\textbf{106} (2005), no. 1-2, 17--26.
%13
\bibitem{Lass-Mart} M. Lassak, H. Martini, Reduced convex bodies in Euclidean space -- a survey,
\emph{Expositiones Math.}, \textbf{29} (2011), 204$-$219.
%14
\bibitem{LM2} M. Lassak, H. Martini, Reduced convex bodies in finite
dimensional normed spaces: a survey, \emph{Results Math.}, \textbf{66} (2014), 405$-$426.
%15
\bibitem{Le} K. Leichtweiss, Zwei Extremalprobleme der Minkowski-Geometrie, {\it Math.
Z.}, {\bf 62} (1955), 37--49.
%16
\bibitem{Mar-Swa} H. Martini, K. J. Swanepoel, The geometry of Minkowski spaces -- a survey, Part II, \emph{Expositiones Math.},
\textbf{22} (2004), 93$-$144.
%17
\bibitem{Mar-Wu} H. Martini, S. Wu, Complete sets need not be reduced in Minkowski spaces, \emph{Beitr. Algebra Geom.}, \textbf{56} (2015), 533$-$539.
%18
\bibitem{MSch} J. P. Moreno, R. Schneider, Diametrically complete sets
in Minkowski spaces, {\it Israel J. Math.}, {\bf 191} (2012), 701$-$720.
%19
\bibitem{MSch2} J. P. Moreno, R. Schneider, Structure of the space of diametrically complete sets
in a Minkowski space, {\it Discrete Comp. Geom.}, {\bf 48} (2012), 467$-$486.
%20
\bibitem{Sall} G. T. Sallee, Sets of constant width, the spherical intersection property and
circumscribed balls, {\it Bull. Austral. Math. Soc.}, {\bf 33} (1986), 369$-$371.

\bibitem{Sch} R. Schneider, Convex Bodies: The Brunn-Minkowski Theory (Second edition). Encyclopedia
of mathematics and its applications {\bf 151}, {\it Cambridge University Press}, Cambridge, 2014.

\end{thebibliography}

\end{document}